\crefname{subsection}{subsection}{subsections}
\crefname{assumption}{assumption}{assumptions}
\theoremstyle{plain}
\newtheorem{theorem}{Theorem}[section]
\newtheorem{lemma}[theorem]{Lemma}
\newtheorem{corollary}[theorem]{Corollary}
\theoremstyle{plain}
\newtheorem{remark}[theorem]{Remark}
\newtheorem{assumption}[theorem]{Assumption}
\newcommand{\dummy}{\mathord{\color{black!33}\bullet}}
\providecommand{\argmin}{\operatorname*{arg\,min}}  
\providecommand{\Ce}{{\cal E}}
\providecommand{\CC}{{\cal C}}
\providecommand{\CE}{{\cal E}}
\providecommand{\CP}{{\cal P}}
\providecommand{\CV}{{\cal V}}
\providecommand{\CW}{{\cal W}}
\providecommand{\bbE}{\mathbb{E}}
\providecommand{\bbN}{\mathbb{N}}
\providecommand{\bbP}{\mathbb{P}}
\providecommand{\bbR}{\mathbb{R}}
\providecommand*{\N}[1]{\left\|{#1}\right\|} 
\newcommand*{\SN}[1]{\left|{#1}\right|}      
\newcommand{\tn}{\textnormal}
\newcommand{\overbar}[1]{\mkern 1.5mu\overline{\mkern-1.5mu#1\mkern-1.5mu}\mkern 1.5mu}
\newcommand{\overbarsuperscriptt}[1]{\mkern 1.5mu\overline{\mkern-1.5muX_{#1}\mkern-1.5mu}\mkern1.5mu}
\newcommand{\overbarsuperscriptit}[1]{\mkern -1.5mu{\mkern 1.5mu{\mkern-1.5mu\overline{X}^i_{#1}\mkern-1.5mu}\mkern1.5mu}}
\newcommand{\overtildesuperscriptit}[1]{\mkern -1.5mu{\mkern 1.5mu{\mkern-1.5mu\widetilde{X}^i_{#1}\mkern-1.5mu}\mkern1.5mu}}
\newcommand{\hatsuperscriptit}[1]{\mkern -1.5mu{\mkern 1.5mu{\mkern-1.5mu\widehat{X}^i_{#1}\mkern-1.5mu}\mkern1.5mu}}
\renewcommand{\underbar}[1]{\mkern 1.5mu\underline{\mkern-1.5mu#1\mkern-1.5mu}\mkern 1.5mu}
\DeclarePairedDelimiter\abs{\lvert}{\rvert}%
\DeclarePairedDelimiter\absbig{\big\lvert}{\big\rvert}%
\renewcommand{\abs}[1]{\left|{#1}\right|}
\providecommand*{\absbig}[1]{\big|{#1}\big|} 
\DeclareMathOperator*{\Law}{Law}
\newcommand{\setnumber}[1]{\llbracket #1 \rrbracket}
\def\R{\mathbb{R}}
\def\Rd{\mathbb{R}^d}
\def\Rn{\mathbb{R}^n}
\def\B{B} 
\def \d{\textup{d}}
\newcommand{\Xbar}[1]{\overbarsuperscriptt #1} 
\newcommand{\Xibar}[1]{\overbarsuperscriptit #1} 
\newcommand{\Xihat}[1]{\hatsuperscriptit {#1}} 
\newcommand{\Xitilde}[1]{\overtildesuperscriptit {#1}} 
\newcommand{\W}[1]{W^i_{#1}}
\newcommand{\dt}[0]{\Delta t}
\newcommand{\disp}[1]{\gamma_{\dt}(#1)} 
\newcommand{\Xtilde}[1]{\widetilde{X}^{#1}}
\newcommand{\cprel}[0]{c_{\tn{B}}} 
\newcommand{\ccor}[0]{C_{\tn{B}}} 
\newcommand{\cmom}[0]{\widetilde{C}} 
\newcommand{\cbm}[0]{c_{d,q}} 
\newcommand{\cbt}[0]{c_{d,2}}
\newcommand{\dbtilde}[1]{\accentset{\approx}{#1}}
\newcommand{\cgap}[0]{\dbtilde{C}} 
\newcommand{\cbgd} [0] {c_{\tn{BDG,q}}}
\newcommand{\cna}[0]{C_{\tn{NA}}} 
\newcommand{\cmfa}[0]{C_{\tn{MFA}}}
\newcommand{\cstab}[0]{c_\tn{S}}
\newcommand{\cbgdt}[0]{c_{\tn{BDG,2}}}
\newcommand{\cwm}[0]{c_\tn{C}}
\newcommand{\cle}[0]{c_\tn{LE}}
\newcommand{\cbmmf}[0]{c_\tn{Bmf}}
\newcommand{\globmin}{x^*}
\newcommand{\minobj}{\underbar \CE}
\newcommand{\maxobj}{\overbar \CE}
\newcommand{\drift}{b}
\newcommand{\diff}{n}
\newcommand{\stoc}[0]{D}
\newcommand{\stocis}[0]{D^{\text{iso}}}
\newcommand{\stocan}[0]{D^{\text{aniso}}}
\newcommand{\Le}{L_{\CE}}
\newcommand{\cu}{c_u}
\newcommand{\cl}{c_l}
\newcommand{\Cll}{\tilde{c}_l}
\newcommand{\indivmeasure}[0]{\varrho} 
\newcommand{\empmeasure}[1]{\widehat\rho_{#1}^N}
\newcommand{\monopmeasure}[1]{\overline\rho_{#1}^N}
\newcommand{\tilempmeasure}[1]{\widetilde\rho_{#1}^N}
\newcommand{\omegaa}[0]{\omega_{\alpha}}
\newcommand{\conspoint}[1]{x_{\alpha}({#1})}
\title{\usefont{OT1}{bch}{b}{n}
	\LARGE Strong Global Convergence of the\\Consensus-Based Optimization Algorithm\\
}
\author[Sabrina Bonandin, Konstantin Riedl and Sara Veneruso]{}
\date{}
\begin{document}
\maketitle

\vspace{-1.8cm}
\centerline{Sabrina Bonandin$^{1}$, Konstantin Riedl$^{2}$ and Sara Veneruso$^{1,3}$}

\bigskip

{\footnotesize
 \centerline{$^1$Institute for Geometry and Practical Mathematics, RWTH Aachen University,}
 \centerline{Templergraben 55, 52062 Aachen, Germany}
} 

\medskip

{\footnotesize
 \centerline{$^2$Mathematical Institute, University of Oxford,}
 \centerline{Radcliffe Observatory, Andrew Wiles Building, Woodstock Rd, Oxford OX2 6GG, United Kingdom}
} 

\medskip

{\footnotesize
  \centerline{$^3$Department of Mathematics and Computer Science, University of Ferrara,}
   \centerline{Via Machiavelli 30, 44121 Ferrara, Italy}
}

\medskip
{\footnotesize
 \centerline{Email addresses: \href{mailto:bonandin@eddy.rwth-aachen.de}{\texttt{bonandin@eddy.rwth-aachen.de}}, \href{mailto:konstantin.riedl@maths.ox.ac.uk}{\texttt{konstantin.riedl@maths.ox.ac.uk}},}
 \centerline{\href{mailto:sara.veneruso@unife.it}{\texttt{sara.veneruso@unife.it}}
}


\begin{abstract}
\noindent
Consensus-based optimization (CBO) is a multi-agent metaheuristic derivative-free optimization algorithm that has proven to be capable of globally minimizing nonconvex nonsmooth functions across a diverse range of applications while being amenable to theoretical analysis. 
The method leverages an interplay between exploration of the energy landscape of the objective function through a system of interacting particles subject to stochasticity and exploitation of the particles' positions through the computation of a global consensus about the location of the minimizer based on the Laplace principle.
In this paper, we prove strong mean square convergence of the practical numerical time-discrete CBO algorithm to the global minimizer for a rich class of objective functions.
For CBO with both isotropic and anisotropic diffusion, our convergence result features conditions on the choice of the hyperparameters as well as explicit rates of convergence in the time discretization step size $\dt$ and the number of particles~$N$.
By interpreting the time-discrete algorithm at the continuous-time level through a system of stochastic differential equations (SDEs), our proof strategy combines traditional finite-time convergence theory for numerical methods applied to SDEs with careful considerations due to the fact that the CBO coefficients do not satisfy a global Lipschitz condition.
To accomodate the latter, we adopt a recently proposed generalization of Sznitman’s classical argument, which allows to discard an event of small probability, controllable through fine moment estimates for the particle systems.


\end{abstract}


{\noindent\small{\textbf{Keywords:} global optimization, derivative-free optimization, metaheuristics, consensus-based optimization, strong convergence, algorithm, mean-field limit, Euler-Maruyama}}\\

{\noindent\small{\textbf{AMS subject classifications:} 65K10, 90C26, 90C56, 60H10, 65C30, 65C20}} 

\maketitle


\section{Introduction}
\label{sec:intro}

Many challenges encountered across various quantitative fields, including operations research, control theory, engineering, economics, and machine learning, involve tackling global unconstrained optimization problems of the form
\begin{equation}
    \label{eq:min_prob}
    \globmin = \argmin_{x\in \Rd} \CE(x),
\end{equation}
where $\CE: \Rd \to \R$, $d \ge 1$ denotes a potentially nonconvex nonsmooth and high-dimensional objective function. 
In scenarios where the lack of regularity or the availability of the objective function $\CE$ as a black box limits the use of traditional gradient-based optimization methods, derivative-free (zero-order) optimization algorithms~\cite{conn09intro}, i.e., methods that rely solely on the evaluation of the objective function $\CE$, have been and are a popular alternative.
Among these, so-called metaheuristics have proven to be surprisingly effective due to their ease of conceptualization, implementation and parallelizability \cite{back1997handbook,blum2003metaheuristics}.
Such methods balance a deterministic exploitation mechanism, aimed at locating regions in the search space that contain high quality approximations to the solution,
and a stochastic exploration feature that encourages the search of the space of admissible solutions, thus enabling the escape from local optima.
Some notable examples include, amongst others, random search \cite{rastrigin1963convergence}, simulated annealing \cite{aarts1989simulated}, genetic algorithms \cite{reeves2010genetic}, and particle swarm optimization \cite{kennedy1995particle}.
Despite their considerable empirical success, their complexity (a consequence of the interplay between stochasticity and potentially intricate decision rules) often results in a lack of rigorous convergence guarantees.
Nonetheless, advancements in this field have been achieved over the last few years through a relatively recent metaheuristic known as consensus-based optimization (CBO) \cite{pinnau2017consensus}.
In the spirit of the two competing factors, exploitation and exploration, CBO algorithms utilize a finite number of $N \in \bbN^+$ particles (also referred to as agents) that explore the landscape of the objective function $\CE$.
Each particle is driven by a deterministic drift and a stochastic diffusion component,
and, as time passes, they concentrate around a so-called consensus point, which serves as a good approximation of the global minimizer $\globmin$.
A global convergence analysis of CBO can be conducted either in the finite-particle regime (also referred to as the microscopic level), as done, for instance, in \cite{bellavia2025discrete}, or in the limit as the number of agents~$N$ approaches infinity, the so-called mean-field regime, where a statistical description of the dynamics of the average agent behavior is considered, see, e.g., \cite{pinnau2017consensus,carrillo2018analytical,fornasier2024consensus,riedl2024perspective}, to name a few. 
The gap between the two regimes has been extensively studied recently, leading to the establishment of what are known as quantitative mean-field results, see \cite{fornasier2020consensus_hypersurface_wellposedness,fornasier2024consensus,gerber2023mean,gerber2025uniform,kalise2022consensus} to give a few examples.

Let us now provide a formal numerical description of the CBO algorithm.
For a fixed number of iterations $K$ and a time step size $\dt>0$,
we denote by $\Xihat{k \dt} \in \Rd$ the position of agent $i \in \setnumber{N}:=\{1,\ldots,N\}$ at time $k \dt$ for $k = 0,\ldots,K$.%
\footnote{To align our notation with the fact that the CBO dynamics is typically formulated in continuous time, we index the positions of the particles by $k\dt$ (rather than by $k$). Thus, each discrete iterate of the numerical scheme corresponds to the physical time $k\dt$.} 
Starting from randomly initialized initial positions~$\Xihat{0}$,
the algorithm's iterative update rule is given for all  $k = 0,\ldots,K-1$ by
\begin{equation}
	\label{eq:EM-CBO}
		\Xihat{(k+1) \dt} = \Xihat{k \dt} 
        -\lambda \left( \Xihat{k \dt} - \conspoint{\empmeasure{k \dt}} \right) \dt
        + \sigma \stoc \left( \Xihat{k \dt} - \conspoint{\empmeasure{k \dt}} \right) 
        \Delta \W{k\dt},
\end{equation}
where $\alpha,\lambda,\sigma >0$ denote user-specified hyperparameters.
$\Delta \W{k\dt} \coloneqq \W{(k+1) \dt}-\W{k \dt}$ are the increments of the Brownian motion $W^i$, i.e., $\Delta \W{0}, \ldots, \Delta \W{(K-1)\dt}$ are independent with $\Delta \W{k\dt}$ being a normally distributed $d$-dimensional random vector with mean zero and covariance matrix $\dt \cdot I_d$, where $I_d \in \R^{d \times d}$ is the identity matrix. 
The particles interact at time $k\dt$ through their empirical measure~$\empmeasure{k\dt}$, and $x_{\alpha}$ denotes the consensus point, which is given for any absolutely continuous probability distribution $\indivmeasure$ on $\Rd$ by
\begin{equation}
    \label{eq:conspoint}
    \conspoint{\indivmeasure} \coloneqq \int_{\Rd} x \frac{\omegaa(x)}{\N{\omegaa}_{L^1(\indivmeasure)}} \,\d\indivmeasure(x)
    \quad \text{ with }\quad
    \omegaa(x) \coloneqq \exp(-\alpha\CE(x)).
\end{equation}
It can be demonstrated that it provides an accurate approximation of $\globmin$ through the well-known Laplace principle \cite{dembo2009large,miller2006applied}, according to which 
\begin{equation*}
    \lim_{\alpha \to \infty} \left( -\frac{1}{\alpha} \log \left( \int_{\Rd} \omegaa(x) \,\d\indivmeasure(x) \right) \right) = \inf_{x \in \tn{supp}(\indivmeasure)} \CE(x).
\end{equation*}
In \eqref{eq:EM-CBO}, the term governed by $\lambda$ represents the deterministic drift term, which is designed to pull each agent towards $x_{\alpha}$.
On the other hand, the term multiplied by $\Delta \W{k\dt}$ is the stochastic diffusion term that facilitates exploration of the landscape of the objective function, and is regulated by the parameter $\sigma$ and the scaling transformation $\stoc$.
$\stoc:\Rd \to \mathbb{R}^{d\times d}$ implements a continuous function that was initially introduced in the pioneering work on CBO \cite{pinnau2017consensus} in the isotropic form $\stoc(\dummy) = \stocis(\dummy)\coloneqq |\dummy|\,I_d$, with $\SN{\dummy}$ denoting the Euclidean norm in $\Rd$. 
Subsequently, in order to enhance the feasibility and competitiveness of CBO in large-scale and high-dimensional applications, beginning with \cite{carrillo2021consensus}, it was replaced by its component-wise anisotropic counterpart $\stoc(\dummy) = \stocan(\dummy)\coloneqq \mathrm{diag}(\dummy)$, with $\tn{diag}: \Rd \to \R^{d\times d}$ being the operator mapping a vector of $\Rd$ onto a diagonal matrix with the vector as its diagonal. 

Examples demonstrating the applicability of CBO to such high-dimensional problems include tasks from compressed sensing \cite{riedl2022leveraging} and phase retrieval~\cite{fornasier2020consensus_sphere_convergence}, robust subspace detection \cite{fornasier2020consensus_sphere_convergence} and sparse representation learning \cite{trillos2024CB2O}, 
the training of neural networks for image classification \cite{carrillo2019consensus,fornasier2021convergence,riedl2022leveraging,borghi2023consensus}, as well as clustered federated learning problems \cite{carrillo2024fedcbo,trillos2024attack} and adversarial training~\cite{trillos2024attack,roith2025consensus}, and the optimization of qubit configurations~\cite{de2025consensus}.
Code for CBO is available at \cite{bailo2024cbx}.


\subsection{Contributions and main result}
\label{sec:contributions}

Motivated by the aforementioned wide range of applications of the CBO method,
we present in this paper a quantitative convergence result for the implementable algorithm \eqref{eq:EM-CBO} for CBO with both isotropic and
anisotropic diffusion.
To present our results for both cases concisely, we introduce the constant
\begin{equation}
\label{def:kappa}
    \kappa(\stoc) \coloneqq
    \begin{cases}
        d \quad &\tn{if $\stoc = \stocis$},\\
        1 \quad &\tn{if $\stoc = \stocan$}.
    \end{cases}
\end{equation}
More precisely, we prove strong global mean square convergence (see, e.g., \cite[Theorem 7.11]{graham2013stochastic} or \cite[Theorem 10.6.4]{kloeden1992numerical}) of the solution $\Xihat{k \dt}$ of the numerical scheme to the global minimizer $\globmin$ with an explicit rate.

\begin{theorem}
    \label{thm:gap_tEM_globmin} 
    Suppose that $\CE$ satisfies \Cref{ass:wp,ass:ICP}.
    Let $\rho_0 \in \CP_q(\Rd)$ for some $q\ge 4$ be such that $\globmin \in \tn{supp}(\rho_0)$ and initialize $\Xihat{0}\sim\rho_0$.
    Define $\CV(\rho_0) \coloneqq \frac{1}{2} \int_{\Rd} \SN{x-\globmin}^2 \d \rho_0(x)$. Fix a time step size $0 < \dt \le 1$ and a total number of iterations $K \in \bbN^+$, and let
    $\{\Xihat{k \dt}\}_{k\in\setnumber{K}}^{i\in\setnumber{N}}$ 
    denote the iterates generated by the CBO algorithm \eqref{eq:EM-CBO}.
    Let $\vartheta \in (0,1)$, and choose hyperparameters $\lambda, \sigma > 0$ satisfying $2\lambda - \kappa(D) \sigma^2 > 0$, with $\kappa(\stoc)$ defined in \eqref{def:kappa}.
    Then there exists $\alpha_0>0$ 
    such that for all $\alpha\geq\alpha_0$ it holds for any $i \in \setnumber{N}$ that
    \begin{equation}
    \label{eq:gap_tEM_globmin}
    \begin{split}
        \bbE \SN{\Xihat{K \dt} - \globmin}^2
        \leq \cna\Delta t + \cmfa N^{-\min \left\{1, \frac{q-2}{4} \right\}} + 4 \CV(\rho_0)\exp\left(-(1-\vartheta)\big(2\lambda - \kappa(D) \sigma^2\big)K\Delta t\right).
    \end{split}
    \end{equation}
    The constants $\cna$ and $\cmfa$, independent of both $N$ and $\dt$, but dependent on 
    $K \dt$, are defined in \Cref{thm:gap_tEM_tmfCBO}.
\end{theorem}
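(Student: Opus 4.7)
The structure of the bound in \eqref{eq:gap_tEM_globmin} makes the proof strategy transparent: each of the three summands corresponds to one stage of a three-step comparison. The plan is to introduce two intermediate processes between the implementable iterates $\Xihat{k\dt}$ and the minimizer $\globmin$, namely the continuous-time interacting particle system $\Xibar{t}$ solving the CBO SDE with $N$ agents, and a system of $N$ independent copies $\Xitilde{t}$ of the mean-field CBO dynamics (driven by the same Brownian motions and initialized from the same random data as the discrete scheme). Then I would apply the triangle inequality in $L^2$,
\begin{equation*}
    \bbE\SN{\Xihat{K\dt}-\globmin}^2 \le 3\,\bbE\SN{\Xihat{K\dt}-\Xibar{K\dt}}^2 + 3\,\bbE\SN{\Xibar{K\dt}-\Xitilde{K\dt}}^2 + 3\,\bbE\SN{\Xitilde{K\dt}-\globmin}^2,
\end{equation*}
and bound each contribution separately. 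This decomposition cleanly separates a numerical-analysis error (Euler--Maruyama), a propagation-of-chaos error (mean-field limit), and a dynamics error (mean-field concentration around $\globmin$), matching the three summands on the right-hand side of \eqref{eq:gap_tEM_globmin}.

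For the first term I would invoke a strong convergence theorem for the Euler--Maruyama discretization applied to the $N$-particle CBO SDE, yielding a bound of the form $\cna\dt$ on $\bbE\SN{\Xihat{K\dt}-\Xibar{K\dt}}^2$, with a constant depending on $K\dt$ but independent of $N$ and~$\dt$. For the second term I would use the quantitative mean-field result for CBO, which gives $\bbE\SN{\Xibar{K\dt}-\Xitilde{K\dt}}^2 \lesssim N^{-\min\{1,(q-2)/4\}}$, with the rate determined by the available moment bound of order $q\ge 4$ on $\rho_0$. For the third term I would use the convergence of the mean-field dynamics to the global minimizer: by \Cref{ass:wp,ass:ICP} and the choice $2\lambda - \kappa(\stoc)\sigma^2 > 0$, one obtains exponential decay of $\CV(\rho_t) = \tfrac{1}{2}\bbE\SN{\Xitilde{t}-\globmin}^2$, leading, for $\alpha$ sufficiently large, to the factor $4\CV(\rho_0)\exp\!\big(-(1-\vartheta)(2\lambda-\kappa(\stoc)\sigma^2)K\dt\big)$. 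Putting the three bounds together and absorbing the constant $3$ into the redefinitions of $\cna$ and $\cmfa$ (as done in the referenced \Cref{thm:gap_tEM_tmfCBO}) yields exactly~\eqref{eq:gap_tEM_globmin}.

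The principal obstacle, and the reason the abstract emphasizes a generalization of Sznitman's argument, lies in the first two estimates: the CBO drift and diffusion coefficients are only \emph{locally} Lipschitz (the map $\rho \mapsto \conspoint{\rho}$ involves the weighted mean $\omegaa$, which is only Lipschitz on sets of bounded support and bounded moments). Standard Gr\"onwall-type proofs of strong Euler--Maruyama convergence and of propagation of chaos therefore fail out of the box. The workaround is to introduce a stopping time $\tau_M$ corresponding to the event that one of the particle systems leaves a ball of radius $M$, and to estimate separately on $\{\tau_M > K\dt\}$, where local Lipschitz constants can be used, and on the complement, whose probability is controlled via fine $L^q$ moment estimates of the particle systems (Markov's inequality using the bound $\bbE\sup_{t\le T}\SN{\Xibar{t}}^q<\infty$ together with the analogous bound for $\Xihat{\cdot}$). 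Optimizing $M = M(N,\dt)$ in the resulting split then dictates the polynomial rate $N^{-\min\{1,(q-2)/4\}}$ and preserves the linear rate in $\dt$; the constants $\cna,\cmfa$ that emerge depend on $K\dt$ but not on $N$ or $\dt$, as claimed.

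Once these three lemmas are established (Euler--Maruyama step, mean-field step, mean-field convergence to $\globmin$), the proof of \Cref{thm:gap_tEM_globmin} reduces to assembling them via the triangle inequality above and choosing $\alpha_0$ large enough so that the mean-field convergence lemma applies with rate $(1-\vartheta)(2\lambda-\kappa(\stoc)\sigma^2)$. The nontrivial inputs are thus the moment bounds and the stopping-time argument; the final statement itself is essentially a bookkeeping of constants.
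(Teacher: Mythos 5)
Your overall strategy is sound, but your decomposition is genuinely different from the paper's, and the difference matters both for economy and for the explicit constant in \eqref{eq:gap_tEM_globmin}. You split the error into three pieces by routing through the continuous-time interacting $N$-particle system \eqref{eq:tCBO}: an Euler--Maruyama error, a propagation-of-chaos error, and a mean-field concentration error. The paper instead uses only a \emph{two}-term triangle inequality: it identifies $\Xihat{K\dt}$ with the continuous-time extension $\Xitilde{K\dt}$ of the discrete scheme (they coincide at grid points) and compares this \emph{directly} with the independent mean-field copies $\Xibar{K\dt}$ in a single combined estimate (\Cref{thm:gap_tEM_tmfCBO}), which produces $\cna\dt+\cmfa N^{-\min\{1,(q-2)/4\}}$ in one Gr\"onwall argument; the remaining term is exactly $2\,\bbE\SN{\Xibar{K\dt}-\globmin}^2=4\CV(\rho_{K\dt})$. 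Your route would additionally require a strong Euler--Maruyama convergence theorem for the interacting $N$-particle SDE with constants \emph{uniform in $N$} -- itself nontrivial, since the coefficients depend on the empirical measure and are not globally Lipschitz, so the Sznitman-type truncation you describe would have to be run twice (once for the discretization step, once for the mean-field step) rather than once. Your description of that truncation (discarding a low-probability event controlled by $q$-th moment bounds) is the right idea and matches what the paper does inside \Cref{thm:gap_tEM_tmfCBO} via the sets $\Omega_{N,t}$.

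Two concrete points to fix. First, the three-way split gives the prefactor $6\CV(\rho_0)$ on the exponential term, not the $4\CV(\rho_0)$ claimed in \eqref{eq:gap_tEM_globmin}; this constant is explicit in the statement and cannot be absorbed into $\cna$ or $\cmfa$, so as written your argument proves a slightly weaker bound. Second, you gloss over the fact that the mean-field-law convergence results of \cite{fornasier2024consensus,riedl2024perspective} guarantee the exponential decay of $\CV(\rho_t)$ only up to a stopping horizon $T$ at which a prescribed accuracy $\varepsilon$ is reached; the paper's proof therefore carefully chooses $T^*$ and $\widetilde\varepsilon$ so that $K\Delta t=\xi T^*\le T$, ensuring the decay estimate is valid at the final iterate. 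This bookkeeping is not optional: without it one cannot assert $\CV(\rho_{K\Delta t})\le\CV(\rho_0)\exp(-(1-\vartheta)(2\lambda-\kappa(D)\sigma^2)K\Delta t)$.
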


\begin{corollary}
    \label{cor:gap_tEM_globmin}
    Fix $\varepsilon_{\text{total}} > 0$.
    Under the assumptions of \Cref{thm:gap_tEM_globmin},
    there exist parameter choices $\alpha_0$, $N$, $\dt$, and $K$ such that for any $i \in \setnumber{N}$
    \begin{equation}
        \bbE \SN{\Xihat{K \dt} - \globmin}^2
        \leq \varepsilon_{\text{total}}.
    \end{equation}
    This can be achieved by
    \begin{enumerate}[label=(\roman*)]
        \item choosing $\alpha_0$ as specified in \cite[Theorem~3.7]{fornasier2024consensus} in the case of isotropic noise, and \cite[Theorem~3.6]{riedl2024perspective} in the case of anisotropic noise, with $\varepsilon$ as given in \eqref{eq:proof:mainthm:5},
        \item fixing a time horizon
        \begin{equation}
        \label{eq:defT}
            T\geq \frac{1}{(1-\vartheta)(2\lambda - \kappa(D) \sigma^2)} \log \left(\frac{12 \CV(\rho_0)}{\varepsilon_{\text{total}}}\right)+1,
        \end{equation}
        \item and choosing
        \begin{equation}
            N \ge \left( \frac{3\cmfa}{\varepsilon_{\text{total}}} \right)^{\max \left\{1, \frac{4}{q-2} \right\}}, \quad \dt \le \frac{\varepsilon_{\text{total}}}{3\cna}, \quad \text{and} \quad K = \left\lceil \frac{T}{\dt}\right\rceil.
        \end{equation}
    \end{enumerate}
\end{corollary}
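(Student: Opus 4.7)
The plan is to apply Theorem \ref{thm:gap_tEM_globmin} and engineer the four parameters $\alpha_0, N, \dt, K$ so that each of the three summands on the right-hand side of \eqref{eq:gap_tEM_globmin} contributes at most $\varepsilon_{\text{total}}/3$. Since the three summands depend on essentially disjoint sets of parameters --- the exponential on the terminal time $K\dt$, the discretization error on $\dt$, and the mean-field error on $N$ --- the inequalities decouple, reducing the corollary to three independent calibrations plus a check that $\alpha_0$ is compatible with the hypotheses of Theorem \ref{thm:gap_tEM_globmin}.

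First I would handle the exponential term by requiring
\begin{equation*}
4\CV(\rho_0)\exp\left(-(1-\vartheta)\bigl(2\lambda - \kappa(D)\sigma^2\bigr)K\dt\right) \leq \frac{\varepsilon_{\text{total}}}{3},
\end{equation*}
which, after taking logarithms, translates into $K\dt \geq \frac{1}{(1-\vartheta)(2\lambda - \kappa(D)\sigma^2)}\log\bigl(12\CV(\rho_0)/\varepsilon_{\text{total}}\bigr)$. Choosing $T$ as in \eqref{eq:defT} and $K = \lceil T/\dt \rceil$ then ensures $K\dt \geq T$, so the exponential term is controlled. The buffer $+1$ present in the definition of $T$ also yields $K\dt \leq T + \dt \leq T+1$ thanks to $\dt \leq 1$, which is convenient because $\cna$ and $\cmfa$ are permitted to depend on $K\dt$ and we will need them to remain finite.

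Next I would calibrate the other two terms. Imposing $\cna \dt \leq \varepsilon_{\text{total}}/3$ forces $\dt \leq \varepsilon_{\text{total}}/(3\cna)$, and imposing $\cmfa N^{-\min\{1,(q-2)/4\}} \leq \varepsilon_{\text{total}}/3$ forces $N \geq (3\cmfa/\varepsilon_{\text{total}})^{\max\{1,4/(q-2)\}}$, matching clause (iii). For the threshold $\alpha_0$ I would invoke the quantitative mean-field estimates \cite[Theorem~3.7]{fornasier2024consensus} (isotropic noise) and \cite[Theorem~3.6]{riedl2024perspective} (anisotropic noise), applied at the accuracy $\varepsilon$ appearing in \eqref{eq:proof:mainthm:5} in the proof of Theorem \ref{thm:gap_tEM_globmin}; this is precisely clause (i). Summing the three contributions, each $\leq \varepsilon_{\text{total}}/3$, then yields $\bbE \SN{\Xihat{K\dt} - \globmin}^2 \leq \varepsilon_{\text{total}}$.

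The only step requiring care, rather than a serious obstacle, is the apparent circularity between $T$ (selected in terms of $\varepsilon_{\text{total}}$) and the constants $\cna, \cmfa$ (which may grow with the terminal time $K\dt$, hence with $T$). There is no genuine issue: once $T$ is fixed by the exponential calibration and $K\dt$ is pinned to the interval $[T, T+1]$ by the buffer, $\cna$ and $\cmfa$ are specific finite numbers depending only on the problem data and on $T$, so the subsequent thresholds for $\dt$ and $N$ are well defined. The arithmetic is therefore pure bookkeeping, provided the choices are made in the order $T$ first, then $N, \dt, K$.
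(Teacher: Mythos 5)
Your proposal is correct and follows exactly the route the paper intends (the corollary is stated without a separate proof precisely because it is this three-way calibration of the bound in \Cref{thm:gap_tEM_globmin}): each summand is forced below $\varepsilon_{\text{total}}/3$, with $T$ and $K=\lceil T/\dt\rceil$ controlling the exponential term, $\dt$ the discretization term, and $N$ the mean-field term, while $\alpha_0$ comes from the cited mean-field-law results at the accuracy $\varepsilon$ of \eqref{eq:proof:mainthm:5}. Your remark on fixing $T$ first so that $\cna$ and $\cmfa$ are pinned down before the $\dt$- and $N$-thresholds are computed is the right way to dissolve the apparent circularity.
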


\Cref{thm:gap_tEM_globmin} provides an upper bound for the expected squared distance between the solution to the implementable CBO algorithm~\eqref{eq:EM-CBO} and the global minimizer $\globmin$ that holds for any $\alpha \ge \alpha_0$, an arbitrary particle $i \in \setnumber{N}$, and that depends explicitly on the time discretization step size $\dt$, the number of particles $N$, and the final iteration count $K$. 

A straightforward application of Jensen's inequality enables us to derive the same upper bound as in \eqref{eq:gap_tEM_globmin} for the empirical average of the particles' positions, i.e., a bound on $\bbE \SN{\frac{1}{N} \sum_{i=1}^N \Xihat{K \dt} - \globmin}^2$. 

In \Cref{cor:gap_tEM_globmin}, we explicitly outline how to practically select the aforementioned relevant hyperparameters in order to achieve a desired accuracy $\varepsilon_{\text{total}}$. In addition, whenever we refer to the time horizon $T$ in the remainder of the manuscript, we mean that it is chosen according to Equation \eqref{eq:defT}.

To the best of our knowledge, these two results are the first complete practical results demonstrating global convergence in the expected mean squared sense to the global minimizer~$\globmin$ for the implementable CBO algorithm~\eqref{eq:EM-CBO}.
In particular, our manuscript improves upon existing results found in the literature, which we outline as follows.
\begin{itemize}
\item The works \cite{ha2020convergenceHD} and \cite{kalise2022consensus} demonstrate that algorithm \eqref{eq:EM-CBO} achieves stochastic consensus (specifically, in our notation, that there exists the finite limit $\lim_{k \to \infty} \Xihat{k \dt}$ in a suitable probabilistic sense for any $i \in \setnumber{N}$) and that the convergence to it is exponentially fast under suitable assumptions about the system parameters. However, their results necessitate a choice of parameters that implies that the algorithm's convergence can only be established if there is a good initial estimate of the global minimizer, hence suggesting a local, not global, convergence to $\globmin$ (see \cite[Section~2.1]{fornasier2024consensus} for a more detailed discussion on the topic). Additionally, the proof technique they employ prevents them from obtaining an explicit rate in $\dt$.
Furthermore, \cite{ha2020convergenceHD} assumes that all particles share the same Brownian motion.
Our \Cref{thm:gap_tEM_globmin} addresses these limitations.
\item \cite[Theorem~3.8]{fornasier2024consensus} in the case of isotropic noise, and \cite[Theorem~3.19]{riedl2024perspective} in the case of anisotropic noise, present quantitative global convergence results for \eqref{eq:EM-CBO} (with an explicit rate in $\dt$ and $N$), but in a probabilistic framework. More specifically, they demonstrate, in our notation and up to some constant factors, that 
\begin{equation*}
    \SN{\frac{1}{N} \sum_{i=1}^N \Xihat{K \dt} - \globmin}^2 \le \varepsilon_{\text{total}}
\end{equation*}
with probability larger than $1 - \left( \delta + \varepsilon_{\text{total}}^{-1} \left( \cna \Delta t + \cmfa N^{-1} + \varepsilon \right) \right)$, for a desired accuracy $\varepsilon_{\text{total}}$ and a parameter $\delta \in (0,1/2)$. 
$\cna$ is a positive constant depending linearly on $d$, 
exponentially on $T, \delta^{-1}$, and $\cmfa$ is a positive constant depending exponentially on $\alpha, \lambda, \sigma, T, \delta^{-1}$. 
Recalling that global convergence of CBO methods can be derived at either the algorithmic or microscopic level, the authors of \cite{fornasier2024consensus,riedl2024perspective} adopt the latter approach. They achieve the aforementioned result by combining a probabilistic quantitative mean-field result with a global convergence result in mean-field law through a triangular argument.
We remark that, as strong mean square convergence implies convergence in probability, our 
\Cref{thm:gap_tEM_globmin} generalizes the two aforementioned theorems. 
\item 
The work \cite{gerber2023mean} builds upon the probabilistic quantitative mean-field result of \cite{fornasier2024consensus} and introduces a non-probabilistic formulation. Their proof technique is based on a generalization of classical arguments to derive quantitative mean-field results, see McKean's and, later, Sznitman's arguments \cite{mckean1967propagation,sznitman1991propagation,chaintron2022propagation},
which cannot be directly applied due to the lack of global Lipschitz continuity in the coefficients of the CBO, and it involves discarding an event with a small, controllable probability.
They present their quantitative mean-field result for the continuous-time approximation of the implementable CBO's update rule. However, their estimate is insufficient to guarantee practical convergence of the algorithm \cite{gerber2025uniform}. Our \Cref{thm:gap_tEM_globmin} addresses this issue by providing a comprehensive convergence result that combines a novel quantitative mean-field estimate for the implementable CBO algorithm \eqref{eq:EM-CBO} with a global convergence result in mean-field law.
\end{itemize}

\subsection{Organization}
The rest of the paper is organized as follows. 
In \Cref{sec:main_result}, we discuss our main result presented in \Cref{thm:gap_tEM_globmin}. This includes an overview of the key assumptions required, a presentation of the proof strategy, and a detailed proof of the result.
Our main finding depends on several auxiliary results, which are collected and proven in \Cref{sec:auxiliary}. 
In the appendix, we provide various technical supplementary findings necessary for \Cref{sec:auxiliary}, alongside a summary of the constants that appear throughout this work.

\subsection{Notation}
We set $\setnumber{N}:=\{1,\ldots,N\}$, for any positive natural number $N \in \bbN^+$.
The Euclidean and $\ell^\infty$ norms of a vector $u \in \Rd$ are denoted by $\SN{u}$ and $\N{u}_{\infty}$, respectively, and the corresponding balls are written as $\B_{r}(u) := \{y \in \bbR^d: \SN{y-u} \leq r\}$ and $\B^{\infty}_{r}(u) := \{y \in \bbR^d: \N{y-u}_{\infty} \leq r\}$. For a matrix $A \in \R^{d\times d}$, the notation $\N{A}_F$ refers to the Frobenius norm. 

Given a random variable $X$, $\bbE(X)$ denotes its expectation. This manuscript focuses mainly on time-dependent stochastic processes, and we indicate by $\Omega$ the sample space on which they are defined.                                
The set of probability measures over a metric space $E$ is represented by $\CP(E)$, and the probability measures with finite moments up to order $1\leq r<\infty$ are collected in $\CP_r(E) \subset \CP(E)$.  
$\CP_{r,R}(E) \subset \CP_r(E)$ denotes the set of probability measures with $r$-th moment bounded by $R >0$.
When discussing a particular fixed distribution, we write~$\indivmeasure$.
$\CW_r$
is the standard \mbox{Wasserstein-$r$} distance (see, e.g., \cite{savare2008gradientflows}).

For the space of continuous functions~$f:X\rightarrow Y$ we write $\CC(X,Y)$, with $X \subset \Rn$ and $Y$ a suitable topological space. 
In the case of real-valued functions we omit $Y$. 

We use the symbol $\lesssim$ 
to denote constants that hold true up to a factor of positive integer up to some power of $q$.
Whenever we state that a constant depends on $\CE$, we mean that it relies on some of the constants present in \Cref{ass:wp,ass:ICP}.

\section{Discussion of the main result}
\label{sec:main_result}

In this section, we first discuss the assumptions on the objective function $\CE$ that will be used throughout the article.
Subsequently, we outline our proof strategy for establishing our main result, \Cref{thm:gap_tEM_globmin}.
This necessitates the introduction of several auxiliary continuous-time processes as well as an intermediate result in \Cref{thm:gap_tEM_tmfCBO}, whose proof is deferred to \Cref{sec:auxiliary}. We  conclude this section with the proof of \Cref{thm:gap_tEM_globmin}.

\subsection{Assumptions}
\label{sec:key_assumptions}

Let us first specify and discuss the assumptions on the objective function that form the basis for our theoretical analysis.
\begin{assumption}
    \label{ass:wp}
    The objective function $\CE \in \CC(\Rd)$
    \begin{enumerate}[label=\textbf{A\arabic*},labelsep=10pt,leftmargin=35pt, ref={A\arabic*}]
		\item\label[assumption]{ass:wp_lowerbound} 
        is such that there exists a unique $\globmin \in \Rd$ with $\CE(\globmin) = \inf_{x \in \Rd}\CE(x) \eqqcolon \minobj  >- \infty$,
        \item\label[assumption]{ass:wp_lipschitz} 
        satisfies for some constant $\Le> 0$ the condition
        \begin{align*}
        \SN{\CE(x) - \CE(y)} &\le \Le \,(1 + \SN{x} + \SN{y}) \SN{x - y}, \quad \text{for all } x,y \in \Rd, 
        \end{align*}
        \item\label[assumption]{ass:wp_growth} 
        satisfies for some constant $\cu >0$ the condition
        \begin{equation*}
            \CE(x) - \minobj \le \cu \,(1 + \SN{x}^2), \quad \text{for all } x \in \Rd,
        \end{equation*}
        and is either upper bounded by $\maxobj \coloneqq \sup_{x \in \Rd}\CE(x) < \infty$, or satisfies for some constants $\cl > 0$ and $\Cll\geq0$ the assumption
         \begin{equation*} 
		  \CE(x) - \minobj \geq \cl \SN{x}^2, \quad \text{for all } \SN{x} \ge \Cll.
	       \end{equation*}
    \end{enumerate}
\end{assumption}
\begin{assumption}
    \label{ass:ICP} 
    The objective function $\CE \in \CC(\Rd)$ satisfies, for some constants $\CE_{\infty}, R_0, \eta > 0$, and $\nu \in (0,\infty)$,
    \begin{itemize}
    \item in the case of isotropic noise ($\stoc = \stocis$), the conditions
    \begin{align*}
        \SN{x - \globmin} 
        &\le \frac{1}{\eta}\, \bigl( \CE(x) - \underbar{\CE} \bigr)^{\nu}, \quad
         \text{for all } x \in \B_{R_0}(\globmin), \\
    \CE_{\infty} 
        &< \CE(x) - \underbar{\CE}, \quad
         \text{for all } x \in \bigl(\B_{R_0}(\globmin)\bigr)^c;
    \end{align*}    
   \item in the case of anisotropic noise ($\stoc = \stocan$), the conditions
     \begin{align*}
        \N{x - \globmin}_{\infty} 
        &\le \frac{1}{\eta}\, \bigl( \CE(x) - \underbar{\CE} \bigr)^{\nu}, \quad
         \text{for all } x \in \B^{\infty}_{R_0}(\globmin), \\
    \CE_{\infty} 
        &< \CE(x) - \underbar{\CE}, \quad
         \text{for all } x \in \bigl(\B^{\infty}_{R_0}(\globmin)\bigr)^c. 
    \end{align*} 
    \end{itemize}
\end{assumption}

\begin{remark}[Discussion of assumptions on the objective $\CE$]
    The first part of \Cref{ass:wp}, Assumption~\ref{ass:wp_lowerbound}, states that the continuous objective function $\CE$ attains its infimum $\underbar{\CE}$ at some point $\globmin\in \bbR^d$, which is assumed to be unique.
    This is quantified (and, in fact, implied) by \Cref{ass:ICP}, which can be regarded as a tractability condition on the function landscape of the objective $\CE$.
    In the proximity of the global minimizer~$\globmin$, $\CE$ is assumed to be locally coercive.
    More precisely, in the cases of isotropic ($\stoc = \stocis$) or anisotropic ($\stoc = \stocan$) noise, respectively,
    $\CE$ is assumed to grow like $\SN{x - \globmin}^{1/\nu}$ or $\N{x - \globmin}_{\infty}^{1/\nu}$ in an Euclidean or $\ell_\infty$-ball of radius $R_0$ around $\globmin$.
    This assumption is also known as the inverse continuity condition, quadratic growth condition, or Hölderian error bound condition, see \cite{necoara2019linear} for more details.
    Outside these balls, i.e., in the farfield, $\CE$ is assumed to leave at least an $\CE_\infty$-wide gap to the minimal objective function value $\underbar{\CE}$.
    Together these two conditions imply uniqueness of $\globmin$ and quantify how challenging it is to locate $\globmin$ provided merely zero-order information about the objective function~$\CE$.

    The remaining parts of \Cref{ass:wp}, Assumptions~\ref{ass:wp_lipschitz}--\ref{ass:wp_growth}, are consistent with the ones made throughout the literature \cite{carrillo2018analytical,carrillo2019consensus,fornasier2024consensus,fornasier2021anisotropic,riedl2024perspective}.
    They require that $\CE$ is locally Lipschitz-continuous with Lipschitz constant allowed to have linear growth,
    and that $\CE$ is either bounded or at least quadratically growing.
    It is straightforward to extend the analysis to the slightly more general assumptions of \cite{gerber2023mean}.
\end{remark}

\subsection{Proof of the main result \Cref{thm:gap_tEM_globmin}}
\label{sec:key_auxiliary}

In preparation for the proof of our main theoretical result, \Cref{thm:gap_tEM_globmin},
we observe that the time-discrete CBO system \eqref{eq:EM-CBO} can be seen as an Euler-Maruyama time discretization \cite{higham2001algorithmic,graham2013stochastic} of the system of SDEs
\begin{equation}
	\label{eq:tCBO}
		\d X^i_t = -\lambda \left( X^i_t - \conspoint{\empmeasure{t}} \right)
      \d t +  \sigma \stoc \left(X^i_t - \conspoint{\empmeasure{t}} \right) \d W^i_t, 
\end{equation}
where $\{W^{i}_t\}^{i \in \setnumber{N}}$ are independent $d$-dimensional Brownian processes and where $\empmeasure{t}$ denotes the empirical measure associated to the particles $\{ X^i_t \}^{i \in \setnumber{N}}$. Conversely but equivalently, \eqref{eq:tCBO} can be regarded as the continuous-time approximation of \eqref{eq:EM-CBO}.

Our proof strategy for  \Cref{thm:gap_tEM_globmin} is as follows. 
As has been noted already in several previous works on CBO, the lack of global Lipschitz continuity of the drift and diffusion coefficients of \eqref{eq:EM-CBO} poses several challenges in the derivation of quantitative estimates involving its  solution or the solution to its continuous-time approximation \eqref{eq:tCBO} \cite{carrillo2018analytical,huang2021MFLCBO,fornasier2020consensus_hypersurface_wellposedness,fornasier2024consensus,gerber2023mean,gerber2025uniform,kalise2022consensus,huang2024uniform}.
This absence also leads to difficulties regarding our goal of proving strong global mean square convergence for the Euler-Maruyama discretization \eqref{eq:EM-CBO}. Indeed, traditional finite-time convergence theory for numerical methods applied to SDEs necessitates a global Lipschitz condition on both the drift and diffusion coefficients \cite{higham2001algorithmic,graham2013stochastic,platen1999introduction}.
Then, our approach is to combine techniques from the aforementioned classical theory with the strategy of discarding a low-probability event employed in \cite{gerber2023mean} to derive a quantitative mean-field results. More specifically, to prove \Cref{thm:gap_tEM_globmin}, we
\begin{itemize}
    \item proceed as in \cite[Theorem~3.8]{fornasier2024consensus} in the case of isotropic noise, and \cite[Theorem~3.19]{riedl2024perspective} in the case of anisotropic noise,
    and split up the error $\SN{\Xihat{K \dt} - \globmin}$ as the sum of a term estimating the gap between $\Xihat{K \dt}$ and the solution to a suitable mean-field dynamics (later introduced in \eqref{eq:tmfCBOcopycompact_integralform}), and an summand quantifying the distance between such mean-field dynamics and $\globmin$.
    \item Then, we control the second summand by a global convergence result in mean-field law, while addressing the first term through the previously mentioned approach that combines the classical SDE framework with the workaround of \cite{gerber2023mean}.
\end{itemize}
For clarity of exposition, we summarize the estimate of the gap between $\Xihat{K \dt}$ and the solution to the mean-field dynamics \eqref{eq:tmfCBOcopycompact_integralform} in \Cref{thm:gap_tEM_tmfCBO}. 
In the remainder of this section, we introduce the processes required for formulating its statement.
We postpone a more detailed discussion of the aforementioned technique, along with the proof of the theorem, to \Cref{sec:auxiliary}.


First, for compactness of notation, we define
\begin{equation}
\label{not: drift_diff}
\drift(x, \indivmeasure ) \coloneqq - \lambda (x - \conspoint{\indivmeasure })
\quad\text{ and }\quad
\diff(x, \indivmeasure ) \coloneqq \sigma\stoc(x - \conspoint{\indivmeasure }),
\end{equation}
for $x \in \Rd$, $\indivmeasure \in \CP(\Rd)$.
Considering \eqref{eq:tCBO} in the limit $N\to \infty$ leads to a mean-field formulation described by the McKean–Vlasov SDE
\begin{equation*}
    \d \Xbar{t}=\drift\left(\Xbar{t},\rho_t\right)\d t+ \diff \left(\Xbar{t},\rho_t\right)\d W_t,
\end{equation*}
where $\rho_t=\Law(\Xbar{t})$.
Consistently with what has been done in \cite{gerber2023mean}, we introduce $\{ \Xibar{t} \}^{i \in \setnumber{N}}_{t\in[0,T]}$, $N$ independent copies of the above SDE driven by the same Brownian motion of \eqref{eq:tCBO} and initialized by the same processes of \eqref{eq:tCBO} (or, equivalently, \eqref{eq:EM-CBO}). Their integral formulation is described by 
\begin{equation}
\label{eq:tmfCBOcopycompact_integralform}
		\Xibar{t} = X^i_0 + \int_0^t \, \drift\left(\Xibar{s}, \rho_{s}\right) \d s + \int_0^t\diff\left(\Xibar{s}, \rho_{s}\right) \d W^i_s,  
\end{equation}
for any $i \in \setnumber{N}, t \in [0,T]$.

As outlined in the proof strategy, in \Cref{thm:gap_tEM_tmfCBO} we aim to estimate the discrepancy between the solution to the implementable Euler-Maruyama scheme \eqref{eq:EM-CBO} and the solution to the mean-field equation \eqref{eq:tmfCBOcopycompact_integralform}.
To accomplish this, we could directly consider the formulation provided by \eqref{eq:EM-CBO}, and combine a discrete-time induction argument with a discrete Gr\"onwall inequality in order to effectively control the approximation error. However, 
for conciseness, we follow the approach described in \cite{graham2013stochastic} and introduce a continuous-time extension of  \eqref{eq:EM-CBO}, which reads
\begin{equation}
	\label{eq:t-EM-CBO}
		\Xitilde{t} = \Xitilde{\disp{t}}+ \drift\left(\Xitilde{\disp{t}}, \tilempmeasure{\disp{t}}\right)(t-\disp{t})+ \diff\left(\Xitilde{\disp{t}}, \tilempmeasure{\disp{t}}\right)(W^i_{t}-W^i_{\disp{t}}),
\end{equation}
for $\; i \in \setnumber{N}$, $t\in[k \dt, (k+1)\dt)$ and 
\begin{equation}
\label{def:disp}
    \disp{t}=k \dt, \quad t\in[k\dt, (k+1)\dt) .
\end{equation}
In \eqref{eq:t-EM-CBO}, the notation $\tilempmeasure{t}$ denotes the empirical measure associated to $\{ \Xitilde{t} \}^{i \in \setnumber{N}}_{t\in[0,T]}$.
    The continuous-time extension \eqref{eq:t-EM-CBO} is constructed so that it coincides with the discrete solution \eqref{eq:EM-CBO} at the grid points $\{ k \dt\}_{k \in \setnumber{K}}$. More precisely, for every $k \in \setnumber{K}$ we have
    \begin{equation*}
        \Xitilde{\disp{t}}=\Xitilde{k \dt}= \Xihat{k \dt}.
    \end{equation*}
It is possible to write \eqref{eq:t-EM-CBO} in the integral form
\begin{equation}
\label{eq:t-EM-CBOintform}
    \Xitilde{t}=X_{0}^i+\int_0^t \drift\left(\Xitilde{\disp{s}}, \tilempmeasure{\disp{s}}\right)\, \d s + \int_0^t \diff\left(\Xitilde{\disp{s}}, \tilempmeasure{\disp{s}}\right)\, \d \W{s}. 
\end{equation}

We are now ready to state the result proving an estimate for the gap between $\Xitilde{t}$, solution to \eqref{eq:t-EM-CBO}, and $\Xibar{t}$, solution to \eqref{eq:tmfCBOcopycompact_integralform}.

\begin{theorem}
    \label{thm:gap_tEM_tmfCBO}
    Suppose that $\CE$ satisfies \Cref{ass:wp}. Let $\rho_0 \in \CP_q(\Rd)$ for some $q\ge 4$.
    Let $\{ \Xitilde{t} \}^i_t$ denote the solution to the dynamics defined in \eqref{eq:t-EM-CBO} with $\dt \le 1$ and $\{ \Xibar{t} \}^i_t$ to  \eqref{eq:tmfCBOcopycompact_integralform}.
    Then, there exist constants $\cna, \cmfa > 0$ (independent of $N, \dt$) such that for any $i \in \setnumber{N}$,
    it holds
    \begin{equation*}
        \bbE \left( \sup_{t \in [0,T]} \SN{\Xitilde{t} - \Xibar{t}}^2 \right) \le \cna \dt + \cmfa N^{-\min\{1,\frac{q-2}{4}\}}
    \end{equation*}
    It holds $\cna=\cna(\Ce, \alpha, \lambda, \sigma, \kappa(\stoc), T, \rho_0, \cbt)$ and $\cmfa=\cmfa(\CE, \alpha, \lambda, \sigma, \kappa(\stoc), q, T, \rho_0) $, for $\kappa(\stoc)$ defined in \eqref{def:kappa}, 
    $T$ chosen according to Equation \eqref{eq:defT} and $\cbt$ the constant of \Cref{lemma: moment_est_brown}.
\end{theorem}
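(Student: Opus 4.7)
The plan is to subtract the integral formulations \eqref{eq:t-EM-CBOintform} and \eqref{eq:tmfCBOcopycompact_integralform} and decompose the integrand into three conceptually distinct contributions:
\begin{equation*}
\drift\bigl(\Xitilde{\gamma_{\dt}(s)},\tilempmeasure{\gamma_{\dt}(s)}\bigr)-\drift\bigl(\Xibar{s},\rho_s\bigr)
=\underbrace{\bigl[\drift\bigl(\Xitilde{\gamma_{\dt}(s)},\tilempmeasure{\gamma_{\dt}(s)}\bigr)-\drift\bigl(\Xitilde{s},\tilempmeasure{s}\bigr)\bigr]}_{\text{time-discretization}}+\underbrace{\bigl[\drift\bigl(\Xitilde{s},\tilempmeasure{s}\bigr)-\drift\bigl(\Xibar{s},\monopmeasure{s}\bigr)\bigr]}_{\text{Lipschitz coupling}}+\underbrace{\bigl[\drift\bigl(\Xibar{s},\monopmeasure{s}\bigr)-\drift\bigl(\Xibar{s},\rho_s\bigr)\bigr]}_{\text{chaos fluctuation}},
\end{equation*}
and analogously for $\diff$, where $\monopmeasure{s}$ denotes the empirical measure of the mean-field copies $\{\Xibar{s}\}_{i=1}^N$. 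After applying $\mathbb{E}\sup_{t\in[0,T]}$, Doob's $L^2$ inequality and the Burkholder--Davis--Gundy inequality (with constant $\cbt$), the goal is to bound each of the three contributions and then close an integral inequality via Gr\"onwall.

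The principal difficulty is that $\drift$ and $\diff$ fail to be globally Lipschitz because the consensus point $\conspointnoarg$ involves the normalizing denominator $\|\omega_\alpha\|_{L^1(\indivmeasure)}$, which may degenerate as $|x|\to\infty$. Following the strategy of \cite{gerber2023mean} as alluded to in \Cref{sec:contributions}, I would introduce a truncation event
\begin{equation*}
    \Omega_R:=\Bigl\{\sup_{t\in[0,T]}\max_{1\le i\le N}\bigl(|\Xitilde{t}|+|\Xibar{t}|\bigr)\le R\Bigr\},
\end{equation*}
and invoke the $q$-th moment estimates (to be developed in \Cref{sec:auxiliary} and the appendix for both $\Xitilde{t}$ and $\Xibar{t}$) together with Markov's inequality to control $\mathbb{P}(\Omega_R^c)\lesssim R^{-q}$. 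On $\Omega_R$, the CBO coefficients become Lipschitz with a constant $L_R$ depending polynomially on $R$ (through the growth of $\omegaa^{-1}$ and the linear growth of $\CE$ from \Cref{ass:wp_lipschitz}), so that
\begin{equation*}
    \bigl|\drift(\Xitilde{s},\tilempmeasure{s})-\drift(\Xibar{s},\monopmeasure{s})\bigr|^2\lesssim_R |\Xitilde{s}-\Xibar{s}|^2+\frac{1}{N}\sum_{j=1}^N|\Xitilde[j]{s}-\Xibar[j]{s}|^2,
\end{equation*}
using $\CW_1(\tilempmeasure{s},\monopmeasure{s})^2\le N^{-1}\sum_j|\Xitilde[j]{s}-\Xibar[j]{s}|^2$ and the Lipschitz dependence of $\conspointnoarg$ in Wasserstein distance.

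The time-discretization term is of order $\dt$: using linear growth of the coefficients together with the uniform-in-time $q$-th moment bounds of $\Xitilde{}$, a standard It\^o-increment/BDG computation gives $\mathbb{E}|\Xitilde{s}-\Xitilde{\gamma_{\dt}(s)}|^2\lesssim \dt$, yielding a $\cna\dt$ contribution upon time integration. The chaos-fluctuation term is the most delicate: conditional on $\Xibar{s}$, the $\Xibar[j]{s}$ are i.i.d.\ with law $\rho_s$, so the Lipschitz dependence of $\drift$ on its measure argument together with a quantitative Wasserstein law-of-large-numbers estimate (the Fournier--Guillin rate, which in dimension $d$ combined with the $q$-th moment bound yields $\mathbb{E}\,\CW_2(\monopmeasure{s},\rho_s)^2\lesssim N^{-\min\{1,(q-2)/4\}}$ after the dimensional adjustment absorbed in $\cmfa$) produces the stated rate. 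Symmetry over the index $i$ then allows closing the Gr\"onwall argument on $\sup_{r\le t}\mathbb{E}|\Xitilde[i]{r}-\Xibar[i]{r}|^2\mathbf{1}_{\Omega_R}$ rather than particle-by-particle.

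The remaining step is to transfer the bound from $\Omega_R$ to the full space: by Cauchy--Schwarz, $\mathbb{E}|\Xitilde{T}-\Xibar{T}|^2\mathbf{1}_{\Omega_R^c}\le \bigl(\mathbb{E}|\Xitilde{T}-\Xibar{T}|^4\bigr)^{1/2}\mathbb{P}(\Omega_R^c)^{1/2}\lesssim R^{-q/2}$, which can be absorbed into the stated error by choosing $R$ as a suitable power of $N$ and $\dt^{-1}$. I expect the main obstacle to be the balancing of the exponential Gr\"onwall factor $\exp(CL_R^2 T)$ (with $L_R$ polynomial in $R$ and $\alpha$) against the polynomial decay $R^{-q}$ of the bad event: the choice of $R$ must be delicate enough that this exponential growth is absorbed by the moment estimates while keeping the final rates in $\dt$ and $N$ intact. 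The constants $\cna$ and $\cmfa$ will inevitably pick up the exponential dependence on $T$ and $\alpha$ that is typical of the CBO analysis, consistent with the dependencies declared in the statement.
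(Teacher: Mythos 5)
Your overall skeleton (subtracting the two integral formulations, a three-way decomposition into time-discretization, coupling, and chaos-fluctuation terms, BDG, an $O(\dt)$ increment estimate, and Gr\"onwall) matches the paper's proof. However, the two places where you locate the real difficulty are handled by arguments that would not close, and in both cases the paper does something structurally different.

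First, the truncation event $\Omega_R=\{\sup_{t}\max_i(|\Xitilde{t}|+|\Xibar{t}|)\le R\}$ with $R\to\infty$ cannot be balanced. On $\Omega_R$ the Lipschitz constant of $\indivmeasure\mapsto\conspoint{\indivmeasure}$ grows like $e^{c\alpha R^2}$ (the weights $\omegaa$ vary multiplicatively by $e^{O(\alpha R^2)}$ on a ball of radius $R$, by \Cref{ass:wp_lipschitz}), so the Gr\"onwall factor is $\exp(CTL_R^2)$, which no polynomial decay $\bbP(\Omega_R^c)\lesssim NR^{-q}$ (note the union bound over $i$ costs an extra factor $N$) can absorb when only $q$-th moments of $\rho_0$ are available. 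The event actually discarded in the paper is $\Omega_{N,t}=\{\frac1N\sum_i|\Xibar{t}|^2\ge\cbmmf\}$, where $\cbmmf$ is the \emph{fixed} a priori second-moment bound of the mean-field process: on its complement the stability estimate \Cref{lemma: stab_estim_weig_mean} applies with a constant $\cstab(\CE,\alpha,\cbmmf)$ that never grows (it requires only \emph{one} of the two measures to lie in $\CP_{2,\cbmmf+1}$), and $\bbP(\Omega_{N,t})\le\cle N^{-q/4}$ decays in $N$ by the large-excursion bound \Cref{lemma: prob_bound_larg_excur} applied to the i.i.d.\ variables $\sup_t|\Xibar{t}|^2$. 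There is no radius to optimize and no exponential-versus-polynomial tension. Second, your chaos-fluctuation term cannot be routed through $\bbE\,\CW_2(\monopmeasure{s},\rho_s)^2$: the Fournier--Guillin rate is $N^{-2/d}$ for $d>4$, which is dimension-dependent and cannot be ``absorbed into $\cmfa$''. The paper instead bounds $\bbE\SN{\conspoint{\monopmeasure{t}}-\conspoint{\rho_t}}^2\le\cwm N^{-1}$ directly via the bespoke weighted-mean law of large numbers \Cref{lemma: conv_weig_mean_iid_samp}; the exponent $\frac{q-2}{4}$ in the final rate does not come from any Wasserstein LLN but from the H\"older step on the bad event, $(\bbE\SN{\cdot}^q)^{2/q}\,\bbP(\Omega_{N,t})^{(q-2)/q}\lesssim N^{-\frac{q}{4}\cdot\frac{q-2}{q}}=N^{-\frac{q-2}{4}}$. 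Without these two ingredients your argument does not produce the stated rates.
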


\begin{remark}
The constant $\cbt$, which $\cna$ depends on, grows linearly in the dimension $d$ by construction.
\end{remark}
\begin{remark}
    The result obtained for the Euler–Maruyama discretization can be generalized to the Milstein discretization, see e.g. \cite[Chapter 7]{graham1997stochastic}. In this case, the time discretization step enters the upper estimate in the form $(\Delta t)^2$, reflecting the well-known higher order of strong convergence (of the standard SDE framework) associated with the Milstein compared to the Euler-Maruyama scheme. 
\end{remark}
\begin{remark}
\Cref{thm:gap_tEM_tmfCBO} proves the strong mean square convergence of $\Xitilde{t}$ to $\Xibar{t}$. The proof technique can be further generalized to obtain strong $p$th convergence for any $p\in \left(0, \frac{q}{2}\right]$. 
Both constants $\cna$ and $\cmfa$ would then depend on $p$ and the statement would translate to
 \begin{equation*}
        \bbE \left( \sup_{t \in [0,T]} \SN{\Xitilde{t} - \Xibar{t}}^p \right) \le \cna (\dt)^{\frac{p}{2}} + \cmfa N^{-\min\{\frac{p}{2},\frac{q-p}{2p}\}}.
    \end{equation*}
\end{remark}


We are now ready to provide the proof of our main result, \Cref{thm:gap_tEM_globmin}.

\begin{proof}[Proof of \Cref{thm:gap_tEM_globmin}]
    Fix $0 < \dt \le 1$, $K \in \bbN^+$ and $\vartheta \in (0,1)$.
    Let us abbreviate $\xi := (1-\vartheta)/(1+\vartheta/2)$
    and define the time horizon
    \begin{equation}
        T^*
        = \frac{1}{(1-\vartheta)\big(2\lambda - \kappa(D) \sigma^2\big)} \log\left(\left(\frac{\CV(\rho_0)}{\widetilde\varepsilon}\right)^{1/\xi}\right),
    \end{equation}
    where $\widetilde\varepsilon$ is chosen such that $K\Delta t = \xi T^*$ holds.
    This can be achieved with the choice \begin{equation}
        \label{eq:proof:mainthm:4}
        \widetilde\varepsilon = \CV(\rho_0)\exp\left(-(1-\vartheta)\big(2\lambda - \kappa(D) \sigma^2\big)K\Delta t\right).
    \end{equation}
    
    We can now apply 
    \cite[Theorem~3.7]{fornasier2024consensus} in the case of isotropic noise, and
    \cite[Theorem~3.6]{riedl2024perspective} in the case of anisotropic noise
    with accuracy
    \begin{equation}
        \label{eq:proof:mainthm:5}
        \varepsilon = \CV(\rho_0) (\widetilde\varepsilon/\CV(\rho_0))^{1/\xi},
    \end{equation}
    which satisfies $\varepsilon\in(0,\CV(\rho_0))$, and with time horizon $T^*$, which satisfies $T^*=\frac{1}{(1-\vartheta)(2\lambda - \kappa(D) \sigma^2)} \log\left(\CV(\rho_0)/\varepsilon\right)$ by construction.
    The above-cited results show that there exists some $\alpha_0 > 0$ such that for $\alpha\geq\alpha_0$, there is $T\in[\xi T^*,T^*]$ with $\CV(\rho_T)=\varepsilon$, and that for all $t\in[0,T]$ it holds
    \begin{equation}
        \CV(\rho_t)
        \leq \CV(\rho_0)\exp\left(-(1-\vartheta)\big(2\lambda - \kappa(D) \sigma^2\big)t\right),
    \end{equation}
    where $\CV(\rho_t) \coloneqq \frac{1}{2} \int_{\Rd} \SN{x-\globmin}^2 \d \rho_t(x)$.
    Thus, in particular, since by the choice of $\widetilde\varepsilon$ it holds $K \Delta t = \xi T^*$, we have
    \begin{equation}
        \label{eq:proof:mainthm:9}
    \begin{split}
        \CV(\rho_{K \Delta t}) \leq \CV(\rho_0)\exp\left(-(1-\vartheta)\big(2\lambda - \kappa(D) \sigma^2\big)K\Delta t\right)
       (=\widetilde\varepsilon).
    \end{split}
    \end{equation}

    Recalling that the solutions to \eqref{eq:EM-CBO} and \eqref{eq:t-EM-CBO} coincide at the discrete time points~$k\Delta t$,
    we obtain with triangle inequality for any $i\in\setnumber{N}$ that
    \begin{align}
        \label{eq:proof:mainthm:10}
         \bbE \SN{\Xihat{K \dt} - \globmin}^2
        &=
        \bbE \SN{\Xitilde{K\Delta t} - \globmin}^2 \nonumber\\
        &\le 2 \bbE \SN{\Xitilde{{K\Delta t}} - \Xibar{{K\Delta t}}}^2 + 2 \bbE \SN{\Xibar{{K\Delta t}} - \globmin}^2 \nonumber\\
        &= 2 \bbE \SN{\Xitilde{{K\Delta t}} - \Xibar{{K\Delta t}}}^2 + 4 \CV(\rho_{K \Delta t}),
    \end{align}
    where in the last equality we have used the definition of $\CV(\rho_{K \Delta t})$.
    In order to estimate the first summand on the right-hand side of \eqref{eq:proof:mainthm:10}, we employ \Cref{thm:gap_tEM_tmfCBO}.
    The second summand can be bounded using \eqref{eq:proof:mainthm:9}, which concludes the proof.
\end{proof}

\section{Proof of \Cref{thm:gap_tEM_tmfCBO}}
\label{sec:auxiliary}

The proof follows the lines of Sznitman's argument to derive quantitative mean-field results, see  \cite{sznitman1991propagation,chaintron2022propagation}, for the integral formulation \eqref{eq:t-EM-CBOintform} of the continuous-time extension process $\{\Xitilde{t}\}_{t}^{i}$. It addresses the absence of a global Lipschitz property for the coefficients of the dynamics of $\{\Xitilde{t}\}_{t}^{i}$ by removing an event of small probability, controllable through their moment estimates, as in \cite[Theorem~2.6]{gerber2023mean}. We estimate the moments of $\{\Xitilde{t}\}_{t}^{i}$ in \Cref{lemma:moment_estimate_tEMCBO}.
\Cref{coroll: prel,lemma:stability_drift_diff}  serve as preliminary results for \Cref{lemma:moment_estimate_tEMCBO} and are also relevant in other parts of \Cref{thm:gap_tEM_tmfCBO}.
Furthermore, to derive an explicit rate in $\dt$, we apply a method commonly used in the numerical analysis of the Euler-Maruyama scheme for SDEs with globally Lipschitz coefficients, see e.g. \cite[Chapter 7]{graham2013stochastic}. This approach requires estimating the gap $\bbE \SN{\Xitilde{t}-\Xitilde{\disp{t}}}$ of the extension process at times $t$ and $\disp{t}$. We adapt the classical theory to our setting in \Cref{lem: gap_t-EM-CBO_t-EM-CBOint}, with \Cref{lemma: moment_est_brown} serving as an auxiliary tool for its proof.

\begin{lemma}
\label{coroll: prel}
     Suppose that $\CE$ satisfies \Cref{ass:wp_lowerbound,ass:wp_growth}.
     Let $\indivmeasure\in\mathcal{P}_q(\mathbb{R}^d)$ for some $q \geq 2$.
     Then, there exist constants $\cprel, \ccor >0$
     such that  
     \begin{align}
        \label{eq:conspoint_bound}
        \SN{\conspoint{\indivmeasure}}^q
        & \leq \cprel \int_{\Rd} \SN{x}^q \d\indivmeasure(x)
        \intertext{and}
        \label{eq:drift_diff_bound}
        \max\left\{\SN{\drift(x,\indivmeasure)}^q, \N{\diff(x,\indivmeasure)}_F^q\right\}
        & \leq \ccor \left(\SN{x}^q+\int_{\Rd} \SN{y}^q \d\indivmeasure(y)\right) \quad \text{for all } x\in\mathbb{R}^d.
     \end{align}
     It holds $\cprel=\cprel(\CE,\alpha,q)$ and $\ccor=\ccor(\lambda,\sigma,\kappa(\stoc),q,\cprel)$.
\end{lemma}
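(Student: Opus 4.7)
My plan is to first establish the consensus-point bound \eqref{eq:conspoint_bound} and then derive \eqref{eq:drift_diff_bound} as a mechanical consequence of the definitions of $\drift$ and $\diff$ combined with the first estimate.

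For \eqref{eq:conspoint_bound}, I would observe that the weights $\omegaa(x)/\N{\omegaa}_{L^1(\indivmeasure)}$ define a probability density with respect to $\indivmeasure$, so Jensen's inequality applied to the convex map $z\mapsto\SN{z}^q$ yields
\[
\SN{\conspoint{\indivmeasure}}^q \;\le\; \int_{\Rd}\SN{x}^q\,\frac{\omegaa(x)}{\N{\omegaa}_{L^1(\indivmeasure)}}\,\d\indivmeasure(x).
\]
It then remains to uniformly control the Radon--Nikodym derivative. The numerator is immediately bounded by $\omegaa(x)\le e^{-\alpha\minobj}$ thanks to \Cref{ass:wp_lowerbound}. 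For the denominator I would distinguish the two alternatives of \Cref{ass:wp_growth}: if $\CE$ is upper bounded by $\maxobj$, then pointwise $\omegaa(y)\ge e^{-\alpha\maxobj}$ and hence $\N{\omegaa}_{L^1(\indivmeasure)}\ge e^{-\alpha\maxobj}$; if instead $\CE$ satisfies the quadratic lower-growth bound, I would apply Jensen's inequality to the convex function $t\mapsto e^{-\alpha t}$ on the denominator and bound $\int\CE\,\d\indivmeasure$ via the upper-growth assumption $\CE(y)-\minobj\le c_u(1+\SN{y}^2)$, obtaining
\[
\N{\omegaa}_{L^1(\indivmeasure)} \;\ge\; \exp\Bigl(-\alpha\minobj-\alpha c_u\Bigl(1+\int_{\Rd}\SN{y}^2\,\d\indivmeasure(y)\Bigr)\Bigr).
\]
In either subcase the resulting uniform bound on the density absorbs into a constant $\cprel$ depending on $\CE$, $\alpha$, and $q$ (and implicitly on low-order moments of $\indivmeasure$, which are finite since $q\ge 2$).

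For \eqref{eq:drift_diff_bound} I would unpack the definitions in \eqref{not: drift_diff} and apply the convexity inequality $\SN{a+b}^q\le 2^{q-1}(\SN{a}^q+\SN{b}^q)$ to get
\[
\SN{\drift(x,\indivmeasure)}^q \;\le\; \lambda^q\,2^{q-1}\bigl(\SN{x}^q + \SN{\conspoint{\indivmeasure}}^q\bigr).
\]
For the diffusion term I would exploit that both $\stocis(z)=\SN{z}I_d$ and $\stocan(z)=\mathrm{diag}(z)$ satisfy $\N{\stoc(z)}_F^2=\kappa(\stoc)\SN{z}^2$, producing an analogous bound with prefactor $\sigma^q\kappa(\stoc)^{q/2}2^{q-1}$. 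Substituting \eqref{eq:conspoint_bound} into each estimate controls $\SN{\conspoint{\indivmeasure}}^q$ by $\cprel\int\SN{y}^q\,\d\indivmeasure(y)$, yielding \eqref{eq:drift_diff_bound} with $\ccor$ a product of $\lambda^q$ or $\sigma^q\kappa(\stoc)^{q/2}$, powers of $2$, and $\cprel$.

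The main subtlety will lie in the unbounded-objective branch of \eqref{eq:conspoint_bound}: no measure-independent positive lower bound on $\N{\omegaa}_{L^1(\indivmeasure)}$ is available in general, but the Jensen-based denominator estimate above scales controllably with the second moment of $\indivmeasure$, which is exactly what downstream applications (where the lemma is invoked with $\indivmeasure$ equal to $\rho_t$ or an empirical measure whose moments are bounded a priori) will require.
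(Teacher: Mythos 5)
Your derivation of \eqref{eq:drift_diff_bound} from \eqref{eq:conspoint_bound}, and your treatment of the bounded-objective branch of \eqref{eq:conspoint_bound}, are correct and match the paper (which obtains \eqref{eq:conspoint_bound} by invoking \Cref{lemma: mom_bounds}, i.e.\ \cite[Proposition~A.4]{gerber2023mean}, with $r=1$ and raising to the $q$-th power). The genuine gap is in the unbounded-objective branch. There your lower bound on the denominator, $\N{\omegaa}_{L^1(\indivmeasure)}\ge \exp(-\alpha\minobj-\alpha\cu(1+\int\SN{y}^2\d\indivmeasure))$, combined with the crude numerator bound $\omegaa(x)\le e^{-\alpha\minobj}$, produces a constant of the form $e^{\alpha\cu(1+\int\SN{y}^2\d\indivmeasure)}$, i.e.\ a $\cprel$ depending (exponentially) on the second moment of $\indivmeasure$. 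This contradicts the statement being proved, which asserts $\cprel=\cprel(\CE,\alpha,q)$, and your closing claim that a moment-dependent constant suffices downstream is not correct: the lemma is applied (in \Cref{lemma:stability_drift_diff} and in \eqref{eq:proof_item2}--\eqref{eq:proof_item3}) to the \emph{random} empirical measures $\tilempmeasure{t}$ and $\monopmeasure{t}$, and the constant must be deterministic so that it can be pulled out of the expectation; a factor $\exp\bigl(\alpha\cu\tfrac1N\sum_j\SN{X^j}^2\bigr)$ would require exponential moment bounds on the particles, which are not available (\Cref{lemma:moment_estimate_tEMCBO} and \Cref{lemma: exis_uniq_mf_tCBO} only give polynomial moments).

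The missing ingredient is the quadratic lower-growth (coercivity) part of \Cref{ass:wp_growth}, which your argument never uses. The standard proof of the measure-free bound splits the numerator at a radius $R$ chosen in terms of $\bigl(\int\SN{y}^q\d\indivmeasure\bigr)^{1/q}$ and of $\cu/\cl$: on $\SN{x}\le R$ one simply pulls out $R^r$; on $\SN{x}>R$ one uses $\CE(x)-\minobj\ge\cl\SN{x}^2$ to gain a factor $e^{-\alpha\cl\SN{x}^2}$ in the numerator whose decay exactly cancels the factor $e^{\alpha\cu(1+R^2)}$ coming from the Jensen lower bound on the denominator, leaving only a polynomial dependence on the $q$-th moment. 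This is precisely the content of \Cref{lemma: mom_bounds}, which the paper's one-line proof invokes; without it (or an equivalent use of the coercivity assumption), your argument does not establish the lemma as stated.
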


\begin{proof}
    Inequality~\eqref{eq:conspoint_bound} directly follows by applying \Cref{lemma: mom_bounds} with $p=1$.
    To prove Inequality~\eqref{eq:drift_diff_bound}, first note that
    \begin{align}
        \label{eq:coroll: prel:1}
        \SN{x - \conspoint{\indivmeasure}}^q
        \leq 2^{q-1}\left(\SN{x}^q+\SN{\conspoint{\indivmeasure}}^q\right)
        \leq 2^{q-1} 
        \left( \SN{x}^q + \cprel \int_{\Rd} \SN{y}^q \d\indivmeasure(y) \right),
    \end{align} 
    where we used \eqref{eq:conspoint_bound} to obtain the inequality in the last step.
    
    For the drift term,
    we have
    \begin{align*}
        \SN{\drift(x,\indivmeasure)}^q
        & = \SN{-\lambda (x-\conspoint{\indivmeasure})}^q
        = \lambda^q \SN{x-\conspoint{\indivmeasure}}^q,
    \end{align*}
    while for the diffusion term,
    we have 
    \begin{equation*}
        \N{\diff(x,\indivmeasure)}_F^q = \sigma^q \kappa(\stoc)^{\frac{q}{2}} \SN{x - \conspoint{\indivmeasure}}^q,
    \end{equation*}
    with $\kappa(\stoc)$ defined in \eqref{def:kappa}.
    The statement now follows by using \eqref{eq:coroll: prel:1}.
\end{proof}

\begin{lemma}
    \label{lemma:stability_drift_diff}
    Suppose that $\CE$ satisfies \Cref{ass:wp_lowerbound,ass:wp_growth}.
    Fix $q \geq 2$.
    Let $\{\Xitilde{t}\}_{t}^{i}$ 
    denote the solution to the continuous-time extension process~\eqref{eq:t-EM-CBO}.
    Then, for any $t\in[0,T]$ and for any $i \in \setnumber{N}$,
    it holds
    \begin{equation}
        \max\left\{\bbE \SN{\drift\left(\Xitilde{t}, \tilempmeasure{t}\right)}^q, \bbE \N{\diff\left(\Xitilde{t}, \tilempmeasure{t}\right)}_F^q\right\} \leq \ccor \, \bbE \SN{\Xitilde{t}}^q,
    \end{equation}
    where the constant $\ccor$ is as in \Cref{coroll: prel}.
\end{lemma}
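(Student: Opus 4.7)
The plan is a short application of \Cref{coroll: prel} combined with the exchangeability of the particle system \eqref{eq:t-EM-CBO}. The key observation is that the empirical measure $\tilempmeasure{t}$ is a (random) element of $\CP_q(\Rd)$, so the pointwise estimate \eqref{eq:drift_diff_bound} can be invoked $\omega$-wise.

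Concretely, I would substitute $x=\Xitilde{t}$ and $\indivmeasure=\tilempmeasure{t}=\frac{1}{N}\sum_{j=1}^N\delta_{\widetilde{X}^j_t}$ into \eqref{eq:drift_diff_bound}, rewriting the integral $\int_{\Rd}\SN{y}^q\,\d\tilempmeasure{t}(y)$ as the sample average $\frac{1}{N}\sum_{j=1}^N\SN{\widetilde{X}^j_t}^q$, and then take expectations to obtain
\[
\max\bigl\{\bbE\SN{\drift(\Xitilde{t},\tilempmeasure{t})}^q,\,\bbE\N{\diff(\Xitilde{t},\tilempmeasure{t})}_F^q\bigr\}
\le \ccor\Bigl(\bbE\SN{\Xitilde{t}}^q + \frac{1}{N}\sum_{j=1}^N\bbE\SN{\widetilde{X}^j_t}^q\Bigr).
\]
If $\bbE\SN{\Xitilde{t}}^q = \infty$ the statement is vacuous; otherwise the right-hand side is finite and the inequality is meaningful.

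The final step is to collapse the sample-average term by invoking the exchangeability of $(\widetilde{X}^1_t,\ldots,\widetilde{X}^N_t)$: the initial data $\{X^j_0\}_{j=1}^N$ are i.i.d., the Brownian drivers $\{W^j\}_{j=1}^N$ are independent with identical laws, and the recursion \eqref{eq:t-EM-CBO} is symmetric in the particle indices because the coefficients depend on the population only through the permutation-invariant empirical measure $\tilempmeasure{\disp{s}}$. Consequently $\bbE\SN{\widetilde{X}^j_t}^q = \bbE\SN{\Xitilde{t}}^q$ for every $j\in\setnumber{N}$, and the right-hand side reduces to $2\ccor\bbE\SN{\Xitilde{t}}^q$. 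The spurious factor of $2$ can be absorbed into the definition of $\ccor$ (e.g.\ by replacing the $2^{q-1}$ prefactor appearing in the proof of \Cref{coroll: prel} with $2^q$). There is no real obstacle in this argument: the analytic work is already contained in \Cref{coroll: prel}, and the remaining step is the purely structural fact of exchangeability.
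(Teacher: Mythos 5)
Your proposal is correct and follows essentially the same route as the paper: apply \Cref{coroll: prel} pointwise, take expectations, and use the exchangeability of the particle system to identify $\bbE\bigl(\int_{\Rd}\SN{y}^q\,\d\tilempmeasure{t}(y)\bigr)=\bbE\SN{\Xitilde{t}}^q$. Your explicit remark that the resulting factor of $2$ must be absorbed into $\ccor$ is a fair (and slightly more careful) reading of a detail the paper glosses over.
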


\begin{proof}
    Leveraging \Cref{coroll: prel},
    we have
    \begin{equation*}
        \max\left\{\SN{\drift\left(\Xitilde{t}, \tilempmeasure{t}\right)}^q, \N{\diff\left(\Xitilde{t}, \tilempmeasure{t}\right)}_F^q\right\}
        \leq \ccor \left( \SN{\Xitilde{t}}^q + \int_{\Rd} \SN{y}^q \d \tilempmeasure{t} (y)\right).
    \end{equation*}
    By taking expectations on both sides,
    we obtain the statement since
    \begin{equation}
    \label{eq:nosup}
        \bbE \left(\int_{\Rd} \SN{y}^q \d \tilempmeasure{t} ( y)\right)
        = \bbE \SN{\Xitilde{t}}^q,
    \end{equation}
    which is due to the fact the common law of $(\Xtilde{1},\ldots,\Xtilde{N})$ is invariant under permutations of $\setnumber{N}$, for which reason its marginal laws (which correspond to the particles) are all the same. 
\end{proof}

\begin{lemma}
\label{lemma:moment_estimate_tEMCBO}
     Suppose that $\CE$ satisfies \Cref{ass:wp}.
     Let $\rho_0\in\CP_q(\Rd)$ for some $q\geq2$.
     Let $\{\Xitilde{t}\}_{t}^{i}$
     denote the solution to the continuous-time extension process~\eqref{eq:t-EM-CBO}.
     Then, for any $i \in \setnumber{N}$, there exists a constant $\cmom>0$ (independent of $\dt$, $N$) such that 
     \begin{equation}
     \label{eq:moment_estimate_tEMCBO}
         \max\left\{\bbE \left( \sup_{t\in [0,T]} \SN{\Xitilde{t}}^q\right), \bbE \left( \sup_{t\in [0,T]} \SN{\conspoint{\tilempmeasure{t}}}^q\right) \right\}
         \leq \cmom.
     \end{equation}
     It holds $\cmom=\cmom(\CE, \alpha, \lambda, \sigma, \kappa(\stoc),q,T,\rho_0)$.
\end{lemma}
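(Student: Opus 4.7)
The plan is to establish the bound on $\bbE\sup_{t\in[0,T]}|\Xitilde{t}|^q$ first, via a Gr\"onwall argument applied to the integral form \eqref{eq:t-EM-CBOintform}, and then derive the bound on $\bbE\sup_{t\in[0,T]}|\conspoint{\tilempmeasure{t}}|^q$ as an almost immediate corollary using \Cref{coroll: prel} together with the exchangeability of the system $(\Xtilde{1},\ldots,\Xtilde{N})$.

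For the first bound, I would start from \eqref{eq:t-EM-CBOintform}, apply the triangle inequality, raise to the $q$-th power, and use the elementary inequality $(a+b+c)^q \le 3^{q-1}(a^q+b^q+c^q)$ to split the estimate into three contributions. For the drift integral, taking the supremum over $t\in[0,s]$ and applying Jensen's inequality yields
\begin{equation*}
\sup_{t\in[0,s]}\Big|\int_0^t \drift\bigl(\Xitilde{\disp{r}},\tilempmeasure{\disp{r}}\bigr)\,\d r\Big|^q
\le s^{q-1}\int_0^s \bigl|\drift\bigl(\Xitilde{\disp{r}},\tilempmeasure{\disp{r}}\bigr)\bigr|^q\,\d r.
\end{equation*}
For the stochastic integral, I would invoke the Burkholder--Davis--Gundy inequality with constant $\cbgd$, followed by another application of Jensen's inequality to the $q/2$-th moment of the quadratic variation, obtaining
\begin{equation*}
\bbE\sup_{t\in[0,s]}\Big|\int_0^t \diff\bigl(\Xitilde{\disp{r}},\tilempmeasure{\disp{r}}\bigr)\,\d W^i_r\Big|^q
\le \cbgd\, s^{q/2-1}\int_0^s \bbE\,\bigl\|\diff\bigl(\Xitilde{\disp{r}},\tilempmeasure{\disp{r}}\bigr)\bigr\|_F^q\,\d r.
\end{equation*}

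Both integrands are then controlled by \Cref{lemma:stability_drift_diff}, giving $\ccor\,\bbE|\Xitilde{\disp{r}}|^q \le \ccor\,\bbE\sup_{u\in[0,r]}|\Xitilde{u}|^q$. Collecting the estimates and using the finiteness of $\bbE|X_0^i|^q$ (which holds since $\rho_0\in\CP_q(\Rd)$), I obtain for $\phi(s):=\bbE\sup_{u\in[0,s]}|\Xitilde{u}|^q$ an integral inequality of the form
\begin{equation*}
\phi(s)\le C_0 + C_1\int_0^s \phi(r)\,\d r,
\qquad s\in[0,T],
\end{equation*}
with $C_0$ depending on $\bbE|X_0^i|^q$ and $C_1$ depending on $\CE,\alpha,\lambda,\sigma,\kappa(\stoc),q,T$ and $\cbgd$. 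Gr\"onwall's inequality then yields $\phi(T)\le C_0\exp(C_1 T)$, which is the sought moment bound with a constant $\cmom$ of the stated dependencies.

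For the second bound, I would use \eqref{eq:conspoint_bound} pointwise in $t$ and $\omega$ to write
\begin{equation*}
\sup_{t\in[0,T]}\bigl|\conspoint{\tilempmeasure{t}}\bigr|^q
\le \cprel\,\sup_{t\in[0,T]}\frac{1}{N}\sum_{j=1}^N |\Xtilde{t}^j|^q
\le \cprel\,\frac{1}{N}\sum_{j=1}^N \sup_{t\in[0,T]}|\Xtilde{t}^j|^q.
\end{equation*}
Taking expectations and invoking exchangeability of the particles (each marginal has the same law) reduces the right-hand side to $\cprel\,\bbE\sup_{t\in[0,T]}|\Xitilde{t}|^q$, which has just been bounded by $\cmom$; adjusting $\cmom$ by the factor $\cprel$ if necessary closes the estimate.

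The main technical care point is that the coefficients in \eqref{eq:t-EM-CBOintform} are evaluated at the piecewise-constant times $\disp{s}$ rather than at $s$; however, this only affects the moment estimates through the trivial bound $\bbE|\Xitilde{\disp{s}}|^q\le \bbE\sup_{r\in[0,s]}|\Xitilde{r}|^q$, so it does not interfere with the Gr\"onwall closure. I do not expect a genuine obstacle here, since the lack of a global Lipschitz condition on the CBO coefficients, which is the source of difficulty elsewhere in the paper, is circumvented by \Cref{lemma:stability_drift_diff}: the growth bound on the drift and diffusion in terms of $|\Xitilde{t}|^q$ alone (after taking expectations and exploiting exchangeability) is exactly the linear-in-moment structure required for the Gr\"onwall argument to go through.
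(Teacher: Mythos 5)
Your proposal is correct and follows essentially the same route as the paper: bound the integral formulation via Jensen/H\"older for the drift term and Burkholder--Davis--Gundy for the stochastic integral, control the coefficients with \Cref{lemma:stability_drift_diff}, absorb the $\disp{\cdot}$-evaluation through $\bbE\,\SN{\Xitilde{\disp{s}}}^q\le\bbE\sup_{u\in[0,s]}\SN{\Xitilde{u}}^q$, and close with Gr\"onwall, after which the consensus-point bound follows from \eqref{eq:conspoint_bound} and exchangeability. Your explicit introduction of $\phi(s)=\bbE\sup_{u\in[0,s]}\SN{\Xitilde{u}}^q$ for all $s\in[0,T]$ is in fact a slightly cleaner way to justify the Gr\"onwall closure than the paper's presentation, which states the inequality only at the terminal time.
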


\begin{proof}
    
    The proof follows the lines of \cite[Lemma~3.5]{gerber2023mean} but for the integral formulation \eqref{eq:t-EM-CBOintform} of the continuous-time extension process $\{\Xitilde{t}\}_{t}^{i}$
    of the discrete-time scheme~\eqref{eq:EM-CBO}.
    
    Let us fix $q \ge 2$ and $i \in \setnumber{N}$.
    Using the integral formulation \eqref{eq:t-EM-CBOintform} of the continuous-time extension process $\{\Xitilde{t}\}_{t}^{i}$ of \eqref{eq:t-EM-CBO},
    we can bound 
    \begin{equation*}
        \absbig{\Xitilde{t}}^q
        \lesssim \abs{X_{0}^i}^q+\abs{\int_0^t \drift\left(\Xitilde{\disp{s}}, \tilempmeasure{\disp{s}}\right) \d s}^q + \abs{\int_0^t \diff\left(\Xitilde{\disp{s}}, \tilempmeasure{\disp{s}}\right) \d \W{s}}^q
    \end{equation*}
    for any $t \in [0,T]$.
    By taking first the supremum over $t \in [0, T]$ and consecutively applying the expectation to both sides of the previous inequality, we obtain
    \begin{align*}
        &\bbE \left(\sup_{t\in[0,T]} \SN{\Xitilde{t}}^q\right)\\
        &\lesssim \bbE \SN{X_{0}^i}^q + \bbE\left(\sup_{t\in[0,T]}\SN{\int_0^t \drift \left(\Xitilde{\disp{s}}, \tilempmeasure{\disp{s}}\right) \d s}^q\right) + \bbE\left(\sup_{t\in[0,T]}\SN{\int_0^t  \diff \left(\Xitilde{\disp{s}}, \tilempmeasure{\disp{s}} \right) \d \W{s}}^q\right) \\
        &\lesssim \bbE \SN{X_{0}^i}^q + \bbE\left(\int_0^T \SN{\drift \left(\Xitilde{\disp{t}}, \tilempmeasure{\disp{t}}\right)} \d t\right)^q + \cbgd \, \bbE \left(\int_0^T \left\| \diff \left(\Xitilde{\disp{t}}, \tilempmeasure{\disp{t}} \right)\right\|^2_F \d t\right)^{q/2},
    \end{align*}
    where the second step uses the Burkholder–Davis–Gundy inequality, see~\cite[Theorem~7.3]{mao2007stochastic}, for a constant $\cbgd$ depending only on $q$.
    An application of H\"older's inequality and \Cref{lemma:stability_drift_diff} yields
    \begin{align*}
        &\bbE \left(\sup_{t\in[0,T]} \SN{\Xitilde{t}}^q\right)\\
        &\quad\lesssim\bbE \SN{X_{0}^i}^q+T^{q-1}\int_0^T \bbE \SN{\drift \left(\Xitilde{\disp{t}}, \tilempmeasure{\disp{t}}\right)}^q \d t+ \cbgd T^{q/2-1}\int_0^T \bbE  \left\| \diff(\Xitilde{\disp{t}}, \tilempmeasure{\disp{t}})\right\|_F^q\d t\\
        &\quad\lesssim\bbE \SN{X_{0}^i}^q + C \int_0^T \bbE \SN{\Xitilde{\disp{t}}}^q \d t
    \end{align*}
    for a constant $C$ depending on $q,T$ and $\ccor$ of \Cref{lemma:stability_drift_diff}.
    Since $\disp{t}\leq t$ as of \eqref{def:disp},
    it holds
    \begin{equation*}
        \SN{\Xitilde{\disp{t}}} \leq \sup_{s\in [0,t]} \SN{\Xitilde{s}},
    \end{equation*}
    which allows to estimate
    \begin{align*}
        \bbE \left(\sup_{t\in[0,T]} \SN{\Xitilde{t}}^q\right)
        &\lesssim \bbE \SN{X_{0}^i}^q + C \int_0^T \bbE\left(\sup_{s\in [0,t]} \SN{\Xitilde{s}}^q \right) \d t.
    \end{align*}
    An application of Grönwall's inequality concludes the first part of the statement \eqref{eq:moment_estimate_tEMCBO},
    which introduces the constant $\cmom$ depending on $(\CE, \alpha, \lambda, \sigma, \kappa(\stoc), q, T, \rho_0)$.
    
    The second part follows immediately with Equation~\eqref{eq:conspoint_bound} from \Cref{coroll: prel}.
\end{proof}

\begin{lemma}
    \label{lemma: moment_est_brown}
    Let $\{W_{t}\}_{t \ge 0}$ denote a $d$-dimensional Brownian motion and fix $q\geq 2$. Then, there exists a constant $\cbm > 0$ such that for any $t,s \ge 0$ it holds
    \begin{equation*}
        \bbE \SN{W_t-W_s}^q\leq \cbm (t-s)^{q/2}. 
    \end{equation*}
    $\cbm$ depends on $d,q$ and is proportional to $d^{q/2}$.
\end{lemma}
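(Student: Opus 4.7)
The plan is to reduce the bound to a moment estimate for a standard Gaussian in $\mathbb{R}^d$, and then track the dimension-dependence carefully via Minkowski's inequality.

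First, I would use the stationary increments property of Brownian motion: $W_t - W_s$ has the same distribution as $\sqrt{t-s}\,Z$, where $Z$ is a standard $d$-dimensional Gaussian vector, i.e., $Z \sim \mathcal{N}(0, I_d)$. Consequently
\begin{equation*}
\mathbb{E}\SN{W_t - W_s}^q = (t-s)^{q/2}\,\mathbb{E}\SN{Z}^q,
\end{equation*}
so it suffices to show that $\mathbb{E}\SN{Z}^q \lesssim d^{q/2}$, with the implicit constant depending only on $q$.

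Next, writing $\SN{Z}^2 = \sum_{\ell=1}^d Z_\ell^2$ where $Z_1,\ldots,Z_d$ are i.i.d.\ standard one-dimensional normals, I would apply Minkowski's inequality in $L^{q/2}$ (which is valid since $q/2 \geq 1$):
\begin{equation*}
\big(\mathbb{E}\SN{Z}^q\big)^{2/q}
= \bigg\|\sum_{\ell=1}^d Z_\ell^2\bigg\|_{L^{q/2}}
\leq \sum_{\ell=1}^d \N{Z_\ell^2}_{L^{q/2}}
= d\,\big(\mathbb{E}\SN{Z_1}^q\big)^{2/q}.
\end{equation*}
Raising both sides to the power $q/2$ yields
\begin{equation*}
\mathbb{E}\SN{Z}^q \leq d^{q/2}\,\mathbb{E}\SN{Z_1}^q,
\end{equation*}
and $\mathbb{E}\SN{Z_1}^q$ is a finite constant that depends only on $q$ (it can be written explicitly in terms of the Gamma function, but this is unnecessary). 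Combining the two displays gives the claim with $\cbm = \mathbb{E}\SN{Z_1}^q \cdot d^{q/2}$, which makes the proportionality to $d^{q/2}$ manifest.

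There is no genuine obstacle here: this is a textbook Gaussian moment estimate. The only subtlety worth stressing is the use of Minkowski (rather than a crude Cauchy--Schwarz or Jensen bound on $(\sum_\ell Z_\ell^2)^{q/2}$), since a naive application of Jensen through $(\sum_\ell Z_\ell^2)^{q/2} \leq d^{q/2 - 1} \sum_\ell \SN{Z_\ell}^q$ would also give the correct $d^{q/2}$ scaling but only by accident; Minkowski is the clean and structurally correct route, and it immediately gives the advertised $d^{q/2}$ proportionality.
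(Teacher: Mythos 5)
Your proof is correct and follows the same route as the paper: reduce $W_t-W_s$ to $\sqrt{t-s}\,Z$ with $Z\sim\mathcal{N}(0,I_d)$ and then bound $\bbE\SN{Z}^q$; the paper simply cites ``established estimates for the $q$-th moments of $d$-dimensional normal random vectors'' at that point, whereas you supply the Minkowski-in-$L^{q/2}$ argument that yields the explicit $d^{q/2}\,\bbE\SN{Z_1}^q$ constant and makes the advertised $d^{q/2}$ proportionality transparent (and consistent with the paper's remark that $\cbt=c_{d,2}$ grows linearly in $d$). No gaps.
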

\begin{proof}
By definition of $\{W_{t}\}_{t \ge 0}$, $W_t - W_s$ is a random $d$-dimensional normal vector with mean zero and covariance matrix $(t-s)I_d$.
The statement holds by applying established estimates for the $q$-th moments of $d$-dimensional normal random vectors.
\end{proof}

\begin{lemma}
    \label{lem: gap_t-EM-CBO_t-EM-CBOint}
    Suppose that $\CE$ satisfies \Cref{ass:wp}.
    Let $\rho_0\in\mathcal{P}_q(\mathbb{R}^d)$ for some $q\geq2$.
    Let $\{\Xitilde{t}\}^i_t$ denote the solution to the continuous-time extension process \eqref{eq:t-EM-CBO}.
    Then, for any $i \in \setnumber{N}$, there exists a constant $\cgap>0$ (independent of $N, \Delta t$) such that 
    \begin{equation*}
        \sup_{t \in [0,T]} \bbE \SN{\Xitilde{t}-\Xitilde{\disp{t}}}^q \leq  \cgap \left( (\dt)^q+\cbm (\dt)^\frac{q}{2}\right).
    \end{equation*}
    It holds $\cgap=\cgap(\CE, \alpha, \lambda, \sigma, \kappa(\stoc), q, T, \rho_0)$ and $\cbm$ is the constant from \Cref{lemma: moment_est_brown}.
\end{lemma}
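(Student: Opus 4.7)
The plan is to exploit the fact that on each interval $[\disp{t},\disp{t}+\Delta t)$ the continuous-time extension is piecewise frozen at its left endpoint, so that by the very definition~\eqref{eq:t-EM-CBO} we have the pointwise identity
\begin{equation*}
\Xitilde{t}-\Xitilde{\disp{t}}
= \drift\bigl(\Xitilde{\disp{t}},\tilempmeasure{\disp{t}}\bigr)(t-\disp{t})
  + \diff\bigl(\Xitilde{\disp{t}},\tilempmeasure{\disp{t}}\bigr)\bigl(W^i_{t}-W^i_{\disp{t}}\bigr).
\end{equation*}
Taking the $q$-th power and using $(a+b)^q\le 2^{q-1}(a^q+b^q)$, the problem reduces to bounding a drift contribution and a diffusion contribution separately, both uniformly in $t\in[0,T]$.

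For the drift contribution, since $t-\disp{t}\le\Delta t$, the bound $\bbE\SN{\drift(\Xitilde{\disp{t}},\tilempmeasure{\disp{t}})(t-\disp{t})}^q\le(\Delta t)^q\,\bbE\SN{\drift(\Xitilde{\disp{t}},\tilempmeasure{\disp{t}})}^q$ combined with \Cref{lemma:stability_drift_diff} yields a factor $\ccor\,\bbE\SN{\Xitilde{\disp{t}}}^q$, which \Cref{lemma:moment_estimate_tEMCBO} controls by $\ccor\cmom$. This produces the $(\Delta t)^q$ contribution.

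For the diffusion contribution, the key observation is that the coefficient $\diff(\Xitilde{\disp{t}},\tilempmeasure{\disp{t}})$ is $\mathcal{F}_{\disp{t}}$-measurable, whereas the Brownian increment $W^i_{t}-W^i_{\disp{t}}$ is independent of $\mathcal{F}_{\disp{t}}$. Using the elementary matrix-vector inequality $\SN{Av}\le\N{A}_F\SN{v}$ and the tower property, independence gives
\begin{equation*}
\bbE\SN{\diff(\Xitilde{\disp{t}},\tilempmeasure{\disp{t}})(W^i_{t}-W^i_{\disp{t}})}^q
\le \bbE\N{\diff(\Xitilde{\disp{t}},\tilempmeasure{\disp{t}})}_F^q\cdot\bbE\SN{W^i_{t}-W^i_{\disp{t}}}^q.
\end{equation*}
\Cref{lemma:stability_drift_diff} combined with \Cref{lemma:moment_estimate_tEMCBO} bounds the first factor by $\ccor\cmom$, while \Cref{lemma: moment_est_brown} bounds the second by $\cbm(t-\disp{t})^{q/2}\le\cbm(\Delta t)^{q/2}$. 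Together these produce the $\cbm(\Delta t)^{q/2}$ contribution.

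Collecting the two estimates and taking the supremum over $t\in[0,T]$ yields the claim with $\cgap$ proportional to $2^{q-1}\ccor\cmom$, which depends only on $(\CE,\alpha,\lambda,\sigma,\kappa(\stoc),q,T,\rho_0)$. I do not expect any genuine obstacle here; the only care required is bookkeeping to ensure that $\cgap$ is independent of both $N$ and $\Delta t$. This is automatic, since \Cref{lemma:moment_estimate_tEMCBO} is itself independent of these parameters and the factorization above via independence avoids any appeal to Burkholder--Davis--Gundy type inequalities (which would in any case produce an extra $t^{q/2-1}$ integration factor and thus only strengthen the conclusion).
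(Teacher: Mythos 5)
Your proposal is correct and follows essentially the same route as the paper's proof: the same pointwise identity from \eqref{eq:t-EM-CBO}, the same separate treatment of the drift and diffusion contributions via \Cref{lemma:stability_drift_diff} and \Cref{lemma:moment_estimate_tEMCBO}, and the same use of the independence of $\Xitilde{\disp{t}}$ from the Brownian increment together with \Cref{lemma: moment_est_brown} (the paper likewise avoids Burkholder--Davis--Gundy here). The resulting constant $\cgap=\ccor\,\cmom$ matches the paper's.
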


\begin{proof}
    This proof follows the strategy of \cite[Theorem 7.10]{graham2013stochastic} adapted to our context.

    Let us fix $q \ge 2$ and $i \in \setnumber{N}$.
    Using the formulation \eqref{eq:t-EM-CBO} of the continuous-time extension process $\{\Xitilde{t}\}_{t}^{i}$,
    we can estimate
    \begin{align*}
       \bbE \SN{\Xitilde{t}-\Xitilde{\disp{t}}}^q 
        \lesssim \bbE \SN{ \drift\left(\Xitilde{\disp{t}}, \tilempmeasure{\disp{t}}\right)(t-\disp{t})}^q +\bbE \SN{ \diff\left(\Xitilde{\disp{t}}, \tilempmeasure{\disp{t}}\right) (W_t-W_{\disp{t}})}^q. 
    \end{align*}
    The first term on the right-hand side of the inequality can be bounded as
    \begin{align*}
        \bbE \SN{\drift\left(\Xitilde{\disp{t}}, \tilempmeasure{\disp{t}})(t-\disp{t}\right)}^q &= \bbE \SN{ \drift\left(\Xitilde{\disp{t}}, \tilempmeasure{\disp{t}}\right)}^q \SN{t-\disp{t}}^q\\
        & \leq \ccor \, \bbE \SN{ \Xitilde{\disp{t}} }^q \, (\dt)^q,
    \end{align*}
    where the last step used \Cref{lemma:stability_drift_diff} together with the fact that $\SN{t-\disp{t}} \le \dt$ by definition of $\gamma_{\Delta t}$ in \eqref{def:disp}.
    For the second term,
    utilizing the independence of $\Xitilde{\disp{t}}$ from $W_t-W_{\disp{t}}$ in the second step,
    we can bound
    \begin{align*}
        \bbE \SN{ \diff\left(\Xitilde{\disp{t}}, \tilempmeasure{\disp{t}}\right) (W_t-W_{\disp{t}}) }^q 
        &\leq \bbE \left(\left\| \diff\left(\Xitilde{\disp{t}}, \tilempmeasure{\disp{t}}\right) \right\|_F \SN{ W_t-W_{\disp{t}}}\right)^q \\
        &= \bbE \left\| \diff\left(\Xitilde{\disp{t}}, \tilempmeasure{\disp{t}}\right) \right\|^q_F \, \bbE \SN{ W_t-W_{\disp{t}}}^q \\
        &\le \ccor \bbE \SN{ \Xitilde{\disp{t}} }^q \cbm (\dt)^{q/2},
    \end{align*}
    where the last step uses \Cref{lemma:stability_drift_diff} together with the fact that, as of \Cref{lemma: moment_est_brown}, it holds $\bbE \SN{ W_t-W_{\disp{t}}}^q \le \cbm \SN{ t-\disp{t}}^{q/2} \le \cbm (\dt)^{q/2}$, where the last inequality is again by definition of $\gamma_{\Delta t}$ in \eqref{def:disp}.
    By combining the two estimates obtained above,
    we conclude the initial bound as 
    \begin{align*}
       \bbE \SN{\Xitilde{t}-\Xitilde{\disp{t}}}^q 
        &\lesssim \ccor \, \bbE \SN{ \Xitilde{\disp{t}} }^q \, \left( (\dt)^q + \cbm (\dt)^{q/2} \right) \\
        &\lesssim \ccor \, \bbE \left( \sup_{t\in [0,T]} \SN{\Xitilde{t}}^q\right) \, \left( (\dt)^q + \cbm (\dt)^{q/2} \right) \\
        &\lesssim \ccor \, \cmom \left( (\dt)^q +\cbm (\dt)^{q/2} \right),
    \end{align*}
    where in the last inequality we used \Cref{lemma:moment_estimate_tEMCBO}. Setting $\cgap \coloneqq \ccor \, \cmom$, the statement follows.
\end{proof}

We are now ready to provide a proof of \Cref{thm:gap_tEM_tmfCBO}.

\begin{proof}[Proof of \Cref{thm:gap_tEM_tmfCBO}]
    Let us fix $q \ge 4$ and $i \in \setnumber{N}$.

    Using the integral formulation \eqref{eq:t-EM-CBOintform} of the continuous-time extension process $\{\Xitilde{t}\}_{t}^{i}$ of \eqref{eq:t-EM-CBO}
    and the integral formulation \eqref{eq:tmfCBOcopycompact_integralform} of the independent copies $\Xibar{t}$  of the mean-field dynamics,
    we can bound
    \begin{align*}
        \SN{\Xitilde{t}-\Xibar{t}}^2 
        &\lesssim \SN{ \int_0^t \left(\drift \left(\Xitilde{\disp{s}}, \tilempmeasure{\disp{s}}\right) - \drift \left(\Xibar{s}, \rho_s \right) \right) \d s}^2 \\
        &\hspace{1cm}+ \SN{ \int_0^t \left(\diff \left(\Xitilde{\disp{s}}, \tilempmeasure{\disp{s}}\right) - \diff \left(\Xibar{s}, \rho_s \right) \right) \d W_s^i}^2.
    \end{align*}
    By taking first the supremum over $t \in [0, T]$ and consecutively applying the expectation to both sides of the previous inequality, we obtain 
    \begin{align*}
        \mathbb{E}\left( \sup_{t\in [0,T]} \SN{\Xitilde{t}-\Xibar{t}}^2 \right)
        &\lesssim  \bbE \left( \sup_{t\in [0,T]} \SN{\int_0^t \left( \drift \left(\Xitilde{\disp{s}}, \tilempmeasure{\disp{s}}\right) - \drift \left(\Xibar{s}, \rho_s \right) \right) \d s }^2  \right) \\
        &\hspace{1cm}+  \bbE \left( \sup_{t\in [0,T]} \SN{\int_0^t \left( \diff \left(\Xitilde{\disp{s}}, \tilempmeasure{\disp{s}}\right) - \diff \left(\Xibar{s}, \rho_s \right) \right)\d W^i_s }^2\right)  \\
        &\lesssim \bbE \left( \int_0^T \SN{\drift \left(\Xitilde{\disp{t}}, \tilempmeasure{\disp{t}}\right) - \drift \left(\Xibar{t}, \rho_t \right)} \d t \right)^2 \\
       &\hspace{1cm} + \cbgdt \, \mathbb{E} \int_0^T \left\|\diff \left(\Xitilde{\disp{t}}, \tilempmeasure{\disp{t}}\right) - \diff \left(\Xibar{t}, \rho_t \right) \right\|_F^2 \d t\\
         &\lesssim \, T \, \mathbb{E} \int_0^T \SN{\drift \left(\Xitilde{\disp{t}}, \tilempmeasure{\disp{t}}\right) - \drift \left(\Xibar{t}, \rho_t \right) }^2 \d t \\
       &\hspace{1cm} + \cbgdt \, \mathbb{E} \int_0^T \left\|\diff \left(\Xitilde{\disp{t}}, \tilempmeasure{\disp{t}}\right) - \diff \left(\Xibar{t}, \rho_t \right) \right\|_F^2 \d t,
    \end{align*}
    where the second step uses the Burkholder–Davis–Gundy inequality, see~\cite[Theorem~7.3]{mao2007stochastic}, for a constant $\cbgd$ depending only on $q$.
    The last inequality follows from an application of H\"older's inequality.
    Utilizing the definitions of $\kappa(\stoc)$ and $\drift$, $\diff$ provided in \eqref{def:kappa} and \eqref{not: drift_diff} respectively, we can rearrange the inequality above to yield 
    \begin{equation}
        \label{eq:step_proof}
    \begin{aligned}
        &\mathbb{E}\left( \sup_{t\in [0,T]} \SN{\Xitilde{t}-\Xibar{t}}^2 \right) \\
        &\hspace{1cm} \lesssim \left( T \lambda^2 + \cbgdt \sigma^2 \kappa(\stoc) \right) \int_0^T \left( \mathbb{E} \SN{ \Xitilde{\disp{t}} - \Xibar{t} }^2 + \mathbb{E} \SN{  \conspoint{\tilempmeasure{\disp{t}}} -\conspoint{\rho_t}}^2 \right) \d t, 
    \end{aligned}
    \end{equation}
    
    Let us first focus on the term $\bbE \SN{  \conspoint{\tilempmeasure{\disp{t}}} -\conspoint{\rho_t}}^2$. 
    Denoting by $\monopmeasure{t}$ the empirical measure associated to $\{\Xibar{t} \}^i_t$, we sum and subtract $\conspoint{\monopmeasure{t}}$ to estimate
    \begin{equation}
    \label{eq:proof_cons}
        \bbE \SN{  \conspoint{\tilempmeasure{\disp{t}}} -\conspoint{\rho_t}}^2 \lesssim
        \bbE \SN{  \conspoint{\tilempmeasure{\disp{t}}} -\conspoint{\monopmeasure{t}}}^2 + \bbE \SN{  \conspoint{\monopmeasure{t}} -\conspoint{\rho_t}}^2.
    \end{equation}
    Since $\rho_t \in \CP_q(\Rd)$ (with $ q \ge 4$) due to the regularity of $\rho_0$ and \Cref{lemma: exis_uniq_mf_tCBO}, the second summand on the right-hand side of the inequality can be estimated directly with \Cref{lemma: conv_weig_mean_iid_samp} as
    \begin{equation}
    \label{eq:proof_cons1}
        \bbE \SN{  \conspoint{\monopmeasure{t}} -\conspoint{\rho_t}}^2 \le \cwm N^{-1},
    \end{equation}
    for some constant $\cwm = \cwm(\CE, \alpha)$. 
    For the estimation of the first summand, we follow the strategy adopted in \cite{gerber2023mean}. Therefore set
    \begin{equation*}
        Z^i=\sup_{t\in[0,T]} \SN{\Xibar{t}}^2,
    \end{equation*}
    and observe that, thanks to \Cref{lemma: exis_uniq_mf_tCBO}, $\cbmmf$ dependent on $(\CE, \alpha, \lambda, \sigma, \kappa(\stoc), q, T, \rho_0)$ is such that $\cbmmf+1 > \bbE (Z^i)$.
    Let us now define  
    \begin{equation*}
        \Omega_{N,t} \coloneqq \left\{ \omega\in\Omega : \frac{1}{N} \sum_{i=1}^N \SN{\Xibar{t}(\omega)}^2 \geq \cbmmf \right\}.
    \end{equation*}
    and distinguish between two cases.
    \begin{itemize}
        \item On the set $\Omega \setminus \Omega_{N,t}$, it holds $\monopmeasure{t}(\omega) \in \mathcal{P}_{2,\cbmmf+1} (\mathbb{R}^d)$, as 
        \begin{equation*}
       \int \SN{x}^2 \,\d\monopmeasure{t}(\omega)(x) = \frac{1}{N} \sum_{i=1}^N \SN{ \Xibar{t}(\omega) }^2 < \cbmmf+1.
        \end{equation*}
        We may thus apply \Cref{lemma: stab_estim_weig_mean} for $\CE$ satisfying \Cref{ass:wp} to conclude that for some constant $\cstab = \cstab(\CE,\alpha,\cbmmf)$ it holds
    \begin{align}
        \mathbb{E} \left( \SN{ \conspoint{\tilempmeasure{\disp{t}}} - \conspoint{\monopmeasure{t}}}^2 \mathbf{1}_{\Omega \setminus \Omega_{N,t}}\right) &\leq \cstab \bbE \CW_2(\tilempmeasure{\disp{t}},\monopmeasure{t}) \nonumber\\
        & \leq \cstab \, \mathbb{E}\left( \frac{1}{N} \sum_{i=1}^N \SN{\Xitilde{\disp{t}} - \Xibar{t} }^2 \right) \nonumber \\
        &= \cstab \, \mathbb{E} \SN{\Xitilde{\disp{t}} - \Xibar{t} }^2,  \label{eq:consistency_bound}
    \end{align}
    where in the second line we have used the optimality of the coupling in the definition of the $\CW_2$ distance, along with the observation from Equation~\eqref{eq:nosup} to obtain the last equality.
    \item On the set $\Omega_{N,t}$, on the other hand, the stability estimate of \Cref{lemma: stab_estim_weig_mean} does not apply.
    Nevertheless, we observe that H\"older's inequality yields 
    \begin{align}
        \bbE \left( \SN{ \conspoint{\tilempmeasure{\disp{t}}} - \conspoint{\monopmeasure{t}}}^2 \mathbf{1}_{\Omega_{N,t}}\right)  
        &\leq \left( \bbE \SN{ \conspoint{\tilempmeasure{\disp{t}}} - \conspoint{\monopmeasure{t}}}^q\right)^\frac{2}{q} \left( \bbP (\Omega_{N,t})\right)^\frac{q-2}{q} \nonumber \\
        &\lesssim \left( \mathbb{E} \SN{ \conspoint{\tilempmeasure{\disp{t}}}}^q + \mathbb{E} \SN{ \conspoint{\monopmeasure{t}}}^q \right)^\frac{2}{q} \left( \bbP (\Omega_{N,t})\right)^\frac{q-2}{q}. \label{eq:proof_item}
    \end{align}
    and it remains to estimate the three terms of \eqref{eq:proof_item} separately. 
    
    If $\rho_0\in \mathcal{P}_q(\mathbb{R}^d)$, with $q \geq 4$, it holds $\mathbb{E}\left(\SN{Z^i}^r\right)<+\infty$ for $r=\frac{q}{2} \ge 2$ thanks to \Cref{lemma: exis_uniq_mf_tCBO}. As a result, we may apply \Cref{lemma: prob_bound_larg_excur} to estimate
    \begin{equation}
    \label{eq:proof_item1}
    \mathbb{P}(\Omega_{N,t})\leq \mathbb{P} \left( \frac{1}{N} \sum_{i=1}^N Z^i\geq \cbmmf+1 \right) \leq \cle \, N^{-\frac{q}{4}},
    \end{equation}
    for some constant $\cle = \cle(q,\cbmmf)$.
    
    Moreover, thanks to \Cref{lemma:moment_estimate_tEMCBO}, it holds that
    \begin{equation}
    \label{eq:proof_item2}
        \mathbb{E} \SN{ \conspoint{\tilempmeasure{\disp{t}}}}^q \le 
        \bbE \left( \sup_{t \in [0,T]} \SN{ \conspoint{\tilempmeasure{t}} } \right) \leq \cmom (\CE, \alpha, \lambda, \sigma, \kappa(\stoc), q, T, \rho_0).
    \end{equation}
    Furthermore, Equation~\eqref{eq:conspoint_bound} of \Cref{coroll: prel}, in conjunction with \Cref{lemma: exis_uniq_mf_tCBO}, leads to
    \begin{align}
        \mathbb{E} \SN{ \conspoint{\monopmeasure{t}}}^q &\le \cprel \, \mathbb{E} \left( \int_{\mathbb{R}^d} \SN{x}^q \d\monopmeasure{t}(x)\right) = \cprel \, \frac{1}{N} \sum_{i=1}^N \mathbb{E} \SN{ \Xibar{t} }^q \nonumber\\
        &\le \cprel \mathbb{E} \left( \sup_{t\in[0,T]} \SN{\Xibar{t}}^q \right) 
        \le \cprel \cbmmf. \label{eq:proof_item3}
    \end{align}
    
    From this point forward we denote by $\cmom$ any constant that depends on $(\CE, \alpha, \lambda, \sigma, \kappa(\stoc), q, T, \rho_0)$ while being independent of $N$ and $\dt$. We  note that also the constants $\cle$ and $\cprel \cbmmf$ can be replaced by $\cmom$. Inserting \eqref{eq:proof_item1}-\eqref{eq:proof_item3} into \eqref{eq:proof_item} yields
    \begin{equation}
        \label{eq:moment_prob_bound}
        \bbE \left( \SN{ \conspoint{\tilempmeasure{\disp{t}}} - \conspoint{\monopmeasure{t}}}^2\mathbf{1}_{\Omega_{N,t}}\right)  
        \lesssim \cmom N^{-\frac{q-2}{4}}.
    \end{equation}
    \end{itemize}
    Putting \eqref{eq:consistency_bound} and \eqref{eq:moment_prob_bound} together, we get 
    \begin{align}
    \label{eq:proof_cons2}
        \bbE \SN{  \conspoint{\tilempmeasure{\disp{t}}} -\conspoint{\monopmeasure{t}}}^2 
        &= \mathbb{E} \left( \SN{ \conspoint{\tilempmeasure{\disp{t}}} - \conspoint{\monopmeasure{t}}}^2 \mathbf{1}_{\Omega \setminus \Omega_{N,t}}\right) + \bbE \left( \SN{ \conspoint{\tilempmeasure{\disp{t}}} - \conspoint{\monopmeasure{t}}}^2\mathbf{1}_{\Omega_{N,t}}\right) \nonumber \\
        &\lesssim
        \cmom \left( \mathbb{E} \SN{\Xitilde{\disp{t}} - \Xibar{t} }^2 +  N^{-\frac{q-2}{4}} \right). 
    \end{align}
    Eventually, by combining \eqref{eq:proof_cons1} and \eqref{eq:proof_cons2}, we can continue \eqref{eq:proof_cons} to obtain
    \begin{align*}
        \bbE \SN{  \conspoint{\tilempmeasure{\disp{t}}} -\conspoint{\rho_t}}^2 
        &\lesssim
        \bbE \SN{  \conspoint{\tilempmeasure{\disp{t}}} -\conspoint{\monopmeasure{t}}}^2 + \bbE \SN{  \conspoint{\monopmeasure{t}} -\conspoint{\rho_t}}^2\\
        &\lesssim \cmom \left( \mathbb{E} \SN{\Xitilde{\disp{t}} - \Xibar{t} }^2 +  N^{-\frac{q-2}{4}} + N^{-1} \right),
    \end{align*}
    which provides the desired estimate for the consensus points.
    Plugging it into 
    \eqref{eq:step_proof} gives us
    \begin{equation}
    \label{eq:step_proof2}
        \mathbb{E}\left( \sup_{t\in [0,T]} \SN{\Xitilde{t}-\Xibar{t}}^2 \right) \lesssim
        \cmom
        \left( T \lambda^2 + \cbgdt \sigma^2 \kappa(\stoc) \right) \int_0^T \mathbb{E} \SN{ \Xitilde{\disp{t}} - \Xibar{t} }^2 \d t + \cmom  N^{-\min \left\{\frac{q-2}{4},1 \right\}}.
    \end{equation}
    In order to obtain an explicit rate in $\dt$, we observe that by triangle inequality
    \begin{equation*}
        \bbE \SN{\Xitilde{\disp{t}} - \Xibar{t}}^2 \lesssim \bbE \SN{\Xitilde{\disp{t}} - \Xitilde{t}}^2 + \bbE \SN{\Xitilde{t} - \Xibar{t}}^2.
    \end{equation*}
    The first summand on the right-hand side can be estimated through \Cref{lem: gap_t-EM-CBO_t-EM-CBOint} for $q=2$ (since by assumption $\dt \leq 1$). This gives 
    \begin{equation*}
        \bbE \SN{\Xitilde{\disp{t}} - \Xibar{t} }^2  \leq \cgap  \left( (\Delta t)^2 + \cbt \, \Delta t\right) + \bbE \SN{\Xitilde{t} - \Xibar{t}}^2
         \leq \cgap  \left( 1+ \cbt \right) \Delta t + \bbE \SN{\Xitilde{t} - \Xibar{t}}^2,
    \end{equation*}
    where the second inequality uses $\dt\leq1$.
    Denoting by $\cna$ and $\cmfa$ any constants that are dependent on $(\CE, \alpha, \lambda, \sigma, \kappa(\stoc), T, \rho_0, \cbt)$ and $(\CE,\alpha,\lambda,\sigma, \kappa(\stoc),q,T,\rho_0)$, respectively, we conclude Equation~\eqref{eq:step_proof2} as
    \begin{align*}
        \bbE\left( \sup_{t\in [0,T]} \SN{\Xitilde{t}-\Xibar{t}}^2 \right) 
        &\lesssim \cna \dt +
        \cmfa \int_0^T \mathbb{E} \SN{ \Xitilde{t} - \Xibar{t} }^2 \d t + 
        \cmfa  N^{-\min \left\{\frac{q-2}{4},1 \right\}} \\
        &\lesssim \cna \dt + \cmfa \int_0^T \mathbb{E} \left( \sup_{s\in[0,t]} \SN{\Xitilde{s} - \Xibar{s}}^2 \right) \d t + 
        \cmfa  N^{-\min \left\{\frac{q-2}{4},1 \right\}},
    \end{align*}
    where in the second line we have taken the supremum over $s \in [0,t]$.
    An application of Gr\"onwall's inequality concludes the proof.
\end{proof}

\begin{remark}
    Note that we could have directly applied Gr\"onwall's inequality to Equation~\eqref{eq:step_proof2} by further observing that 
    \begin{equation*}
        \int_0^T \mathbb{E} \SN{ \Xitilde{\disp{t}} - \Xibar{t} }^2 \d t \le \int_0^T \mathbb{E} \left( \sup_{s\in[0,t]} \SN{\Xitilde{s} - \Xibar{s}}^2 \right) \d t.
    \end{equation*}
    However, this would have prevented us from obtaining an explicit rate in $\dt$, therefore necessitating the additional step and the use of \Cref{lem: gap_t-EM-CBO_t-EM-CBOint}.
\end{remark}



\section{Conclusion} \label{sec:conclusion}

In this paper, we establish strong mean square convergence of the time-discrete Euler–Maruyama numerical scheme used to implement the CBO algorithm towards the global minimizer of a nonconvex nonsmooth objective function~$\CE$.
Demonstrating such convergence is challenging due to the lack of a global Lipschitz continuity condition of the drift and diffusion coefficients. 
We tackle this difficulty by following Sznitman's classical argument to derive quantitative mean-field results along with a recently introduced technique of discarding a low-probability moment-estimate-controlled event. 
We combine it with traditional finite-time convergence theory for numerical methods applied to SDEs.

Our main result showcases an explicit rate of convergence in the time discretization step $\dt$ and in the number of particles $N$. 
It improves over several existing results available in the literature, which either provide local convergence results under restrictive conditions on the initial agent configuration, lack the aforementioned explicit rate, or present statements that are either probabilistic or pertain to the continuous-time formulation of the CBO algorithm.

\subsection*{Acknowledgements}
SB and SV would like to thank Michael Herty and Lorenzo Pareschi for the valuable discussions during the final stages of the preparation of the manuscript.

The work of SB is funded by the Deutsche Forschungsgemeinschaft (DFG, German Research Foundation) – 320021702/GRK2326 – Energy, Entropy, and Dissipative Dynamics (EDDy).
SB is a member of the INdAM Research National Group of Mathematical Physics (INdAM-GNFM).
The work of SV is supported by the European Union’s Horizon Europe research and innovation program under the Marie Sklodowska-Curie Doctoral Network Datahyking (Grant No. 101072546).
SV is a member of the INdAM Research National Group of Scientific Computing (INdAM-GNCS).

For the purpose of Open Access, the authors have applied a CC BY public copyright license to any Author Accepted Manuscript (AAM) version arising from this submission.

\bibliographystyle{abbrv}
\bibliography{bibliography.bib}

\appendix
\newpage
\section{Technical auxiliary results}
\label{app:resultsGHV23}

For the reader's convenience,
we provide in this section several technical auxiliary results.

The first lemma is a generalization of \cite[Lemma~3.3]{carrillo2018analytical}.

\begin{lemma} [{\cite[Proposition A.4]{gerber2023mean}}: Bound on the weighted moment]
    \label{lemma: mom_bounds}
    Suppose that $\CE$ satisfies \Cref{ass:wp_lowerbound,ass:wp_growth}.
    Let $\indivmeasure\in\mathcal{P}_q(\mathbb{R}^d)$ for some $q > 0$ and fix $0< r \le q$.
    Then, there exists a constant $\cprel=\cprel(\alpha, \cu, \cl, \Cll, r, q)$ such that
    \begin{equation*}
        \frac{\int_{\Rd} \SN{x}^r e^{-\alpha \CE(x)}\d \indivmeasure( x)}{\int_{\Rd} e^{-\alpha \CE(x)}\d \indivmeasure( x)}\leq \cprel \left( \int_{\Rd} \SN{x}^q \d \indivmeasure( x)\right)^{\frac{r}{q}}.
        \end{equation*}
\end{lemma}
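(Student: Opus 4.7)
The plan is to split the numerator integral at a threshold $\bar{R} > 0$ depending on $M_q := \int\SN{x}^q\,d\indivmeasure$, and to exploit both the upper and lower quadratic growth of $\CE$ from \Cref{ass:wp_growth} to control the pieces: the lower-growth estimate is needed for the tail of the numerator, while the upper-growth estimate (together with Markov's inequality) gives a lower bound on the denominator. A naive Jensen reduction of the form $\int\SN{x}^r\,dQ\leq(\int\SN{x}^q\,dQ)^{r/q}$ (with $dQ=\omegaa d\indivmeasure/\|\omegaa\|_{L^1(\indivmeasure)}$) is tempting but does not lead to a uniform bound, so I work directly with the splitting.

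Specifically, setting $R_M:=(2M_q)^{1/q}$, Markov's inequality yields $\indivmeasure(\SN{x}\leq R_M)\geq 1/2$, so invoking $\CE(x)\leq\minobj+\cu(1+\SN{x}^2)$ on this central ball produces the denominator bound $\|\omegaa\|_{L^1(\indivmeasure)}\geq\tfrac{1}{2}e^{-\alpha\minobj-\alpha\cu(1+R_M^2)}$. I then split $N:=\int\SN{x}^r\omegaa\,d\indivmeasure=N_1+N_2$ at a threshold $\bar{R}\geq\Cll$. The near piece is controlled elementarily by $N_1\leq\bar{R}^r\|\omegaa\|_{L^1(\indivmeasure)}$. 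For the tail, the lower-growth bound $\CE(x)\geq\minobj+\cl\SN{x}^2$ gives $\omegaa(x)\leq e^{-\alpha\minobj}e^{-\alpha\cl\SN{x}^2}$, and combined with the elementary inequality $\SN{x}^r e^{-\alpha\cl\SN{x}^2/2}\leq(r/(\alpha\cl))^{r/2}e^{-r/2}$ this leaves an integrable Gaussian factor, yielding $N_2/\|\omegaa\|_{L^1(\indivmeasure)}\lesssim e^{\alpha\cu(1+R_M^2)-\alpha\cl\bar{R}^2/2}$ with a prefactor depending only on $(\alpha,\cl,r)$.

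I then distinguish two regimes based on $M_q$. In the large-$M_q$ regime $\sqrt{2\cu/\cl}\,R_M\geq\Cll$, the balancing choice $\bar{R}:=\sqrt{2\cu/\cl}\,R_M$ makes the exponent in the $N_2$ bound collapse to $\alpha\cu$ (bounded), while $\bar{R}^r\leq(2\cu/\cl)^{r/2}(2M_q)^{r/q}$ is $\lesssim M_q^{r/q}$ with constants depending only on $\cu,\cl,r,q$. In the small-$M_q$ regime the choice $\bar{R}=\Cll$ would produce a contribution $\Cll^r$ that is not $\lesssim M_q^{r/q}$; to handle this I would instead split $N$ directly at $R_M$, giving $N_1/\|\omegaa\|_{L^1(\indivmeasure)}\leq R_M^r=(2M_q)^{r/q}$, and bound the tail via the elementary Markov--H\"older estimate $\int_{\SN{x}>R_M}\SN{x}^r\,d\indivmeasure\leq M_q^{r/q}\indivmeasure(\SN{x}>R_M)^{1-r/q}\leq M_q^{r/q}(1/2)^{1-r/q}$, which together with the denominator lower bound preserves the $M_q^{r/q}$ scaling (here $R_M$ stays bounded in terms of $\Cll$, so the exponential $e^{\alpha\cu(1+R_M^2)}$ remains controlled). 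Combining both regimes produces the claimed bound with $\cprel=\cprel(\alpha,\cu,\cl,\Cll,r,q)$.

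The main obstacle I foresee is precisely this two-regime bookkeeping necessary to ensure that $\cprel$ depends only on the listed parameters and not on $\indivmeasure$ through $M_q$; getting the constants to match up at the boundary between the two regimes requires some care. A secondary point worth noting is that \Cref{ass:wp_growth} also permits the alternative where $\CE$ is merely bounded above by $\maxobj$ rather than coercive at infinity; in that setting the lower-growth step is replaced by the trivial estimate $\|\omegaa\|_{L^1(\indivmeasure)}\geq e^{-\alpha\maxobj}$, and the same conclusion follows with a constant depending on $\maxobj-\minobj$ in place of $(\cl,\Cll)$.
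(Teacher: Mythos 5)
The paper itself does not prove this lemma --- it is imported wholesale as \cite[Proposition A.4]{gerber2023mean} --- so there is no internal proof to compare against; your argument is, in substance, the standard one behind that result (Markov's inequality plus the upper growth bound to lower-bound $\N{\omegaa}_{L^1(\indivmeasure)}$, a splitting of the numerator, coercivity to kill the far-field piece), and the structure is sound. The one genuine loose end is the additive constant your tail estimate produces in the large-moment regime: bounding $\SN{x}^r e^{-\alpha\cl\SN{x}^2/2}$ by its maximum leaves $N_2/\N{\omegaa}_{L^1(\indivmeasure)}$ controlled by a \emph{constant} $C(\alpha,\cu,\cl,r)$ rather than by a multiple of $\left(\int_{\Rd}\SN{x}^q\,\d\indivmeasure\right)^{r/q}$, and absorbing that constant into the right-hand side requires a strictly positive lower bound on the $q$-th moment throughout that regime. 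Your regime boundary supplies such a bound only when $\Cll>0$; since \Cref{ass:wp_growth} allows $\Cll=0$, the ``large'' regime then contains measures with arbitrarily small $q$-th moment and the chain of estimates does not close. Two routine repairs: either split the regimes at $\max\{\Cll,1\}$ instead of $\Cll$, or --- cleaner --- estimate the far-field piece by combining $\omegaa(x)\le e^{-\alpha\minobj}e^{-\alpha\cl\bar{R}^2}$ on $\{\SN{x}>\bar{R}\}$ with the H\"older bound $\int_{\SN{x}>\bar{R}}\SN{x}^r\,\d\indivmeasure\le\left(\int_{\Rd}\SN{x}^q\,\d\indivmeasure\right)^{r/q}$, which carries the factor $\left(\int_{\Rd}\SN{x}^q\,\d\indivmeasure\right)^{r/q}$ through the tail term and eliminates the additive constant altogether. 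With that adjustment, together with your (correct) treatment of the small-moment regime and of the bounded-$\CE$ alternative in \Cref{ass:wp_growth}, the proof is complete, with $\cprel$ depending only on the listed parameters.
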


The second lemma is a generalization of \cite[Theorem 3.2]{carrillo2018analytical}. In addition to the statement in \cite[Theorem 2.3]{gerber2023mean}, we provide the explicit dependence on the constants that influence the upper bound of the moments of the mean-field SDE, as this will be necessary for the proof of \Cref{thm:gap_tEM_tmfCBO}.

\begin{lemma}[{\cite[Theorem 2.3]{gerber2023mean}}: Existence, uniqueness and bound on the moments of the mean-field SDE]
\label{lemma: exis_uniq_mf_tCBO}   
Suppose that $\CE$ satisfies \Cref{ass:wp} and fix a final time $T>0$. Let $\rho_0\in\CP_q(\Rd)$ for some $q \ge 2$.  
Then, there exists a unique strong solution $\{ \Xbar{t} \}_{t \in [0,T]}$ with initial condition sampled from $\rho_0$.  
Furthermore, the process satisfies
\begin{equation*}
    \bbE \left( \sup_{t\in[0,T]} \SN{\Xbar{t}}^q \right) \le \cbmmf <  \infty, 
\end{equation*}
for some constant $\cbmmf = \cbmmf(\CE, \alpha, \lambda, \sigma, \kappa(\stoc), q, T, \rho_0)$, with $\kappa(\stoc)$ defined in \eqref{def:kappa}.
\end{lemma}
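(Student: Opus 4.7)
The plan is to follow the standard two-step strategy for McKean--Vlasov SDEs with non-globally-Lipschitz coefficients: first establish an \emph{a priori} moment bound that would hold for any strong solution, then construct a solution via a Banach fixed-point argument on a space of curves of probability measures for which the coefficients, once the law is frozen, become globally Lipschitz in the spatial variable. Throughout, the explicit dependence of constants on $(\CE,\alpha,\lambda,\sigma,\kappa(\stoc),q,T,\rho_0)$ will be tracked, which is the only substantive addition over \cite[Theorem~2.3]{gerber2023mean}.

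\textbf{Step 1 (a priori $q$-th moment bound).} Assume a strong solution $\{\Xbar t\}_{t \in [0,T]}$ exists with $\rho_t=\Law(\Xbar t)$ and $\rho_0\in\CP_q(\R^d)$ for $q\ge 2$. Applying It\^o's formula to $|\Xbar t|^q$, then taking $\sup_{s\le t}$ and expectation, and using the Burkholder--Davis--Gundy inequality on the martingale part exactly as in the proof of \Cref{lemma:moment_estimate_tEMCBO}, one obtains
\begin{equation*}
    \bbE\Big(\sup_{s\in[0,t]}|\Xbar s|^q\Big)
    \lesssim \bbE|X_0|^q
    + T^{q-1}\!\int_0^t\!\bbE|b(\Xbar s,\rho_s)|^q\,\d s
    + \cbgd\,T^{q/2-1}\!\int_0^t\!\bbE\|n(\Xbar s,\rho_s)\|_F^q\,\d s.
\end{equation*}
\Cref{coroll: prel} bounds $|b|^q$ and $\|n\|_F^q$ by $\ccor(|\Xbar s|^q+\int|y|^q\d\rho_s(y))$, and since $\rho_s=\Law(\Xbar s)$ the last integral equals $\bbE|\Xbar s|^q\le \bbE\sup_{r\le s}|\Xbar r|^q$. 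Gr\"onwall's inequality then yields
\begin{equation*}
    \bbE\Big(\sup_{s\in[0,T]}|\Xbar s|^q\Big)\le \cbmmf,\qquad \cbmmf=\cbmmf(\CE,\alpha,\lambda,\sigma,\kappa(\stoc),q,T,\rho_0).
\end{equation*}
This simultaneously proves the moment bound and supplies uniform-in-time $q$-th moment control for the fixed-point iteration below.

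\textbf{Step 2 (existence by fixed-point).} Introduce the freezing map $\Phi:\CC([0,T];\CP_q(\R^d))\to\CC([0,T];\CP_q(\R^d))$ defined by $\Phi(\mu)_t:=\Law(Y^\mu_t)$, where $Y^\mu$ solves the \emph{linear} SDE
\begin{equation*}
    \d Y^\mu_t = -\lambda\big(Y^\mu_t - \conspoint{\mu_t}\big)\,\d t + \sigma\stoc\big(Y^\mu_t - \conspoint{\mu_t}\big)\,\d W_t,\qquad Y^\mu_0\sim\rho_0.
\end{equation*}
Once $\mu$ is frozen, the map $t\mapsto\conspoint{\mu_t}$ is a deterministic path bounded in terms of the $q$-th moment of $\mu_t$ (via \eqref{eq:conspoint_bound}), and the resulting coefficients are affine in $Y$ with globally Lipschitz constants depending only on $\lambda,\sigma,\kappa(\stoc)$. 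Standard SDE theory \cite[Ch.~3]{mao2007stochastic} therefore produces a unique strong solution $Y^\mu$, and the same Itô--BDG--Gr\"onwall computation of Step~1 shows $\Phi$ maps the closed ball
\begin{equation*}
    \CB_R := \Big\{\mu\in\CC([0,T];\CP_q(\R^d)):\sup_{t\in[0,T]}\!\!\int |x|^q\,\d\mu_t(x)\le R\Big\}
\end{equation*}
into itself for $R$ chosen as the bound produced in Step~1. To obtain a contraction I will invoke the quantitative stability estimate \Cref{lemma: stab_estim_weig_mean} for $\conspointnoarg$ with respect to $\CW_2$, combined with the It\^o isometry applied to $Y^{\mu}-Y^{\nu}$, to derive
\begin{equation*}
    \bbE|Y^\mu_t-Y^\nu_t|^2\le C\!\int_0^t\!\big(\bbE|Y^\mu_s-Y^\nu_s|^2 + \CW_2^2(\mu_s,\nu_s)\big)\,\d s,
\end{equation*}
and then Gr\"onwall + a short-time argument (followed by iteration across $[0,T]$, or equivalently an equivalent exponentially weighted norm $\sup_{t}e^{-Lt}\CW_2(\mu_t,\nu_t)$) yields a strict contraction. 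Banach's fixed-point theorem gives a unique fixed point, which is the law of the desired mean-field solution.

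\textbf{Step 3 (uniqueness of the strong solution).} Given any two strong solutions $\Xbar{}$ and $\Xbar{}'$ driven by the same Brownian motion with the same initial datum, Step~1 shows both laws lie in $\CB_R$ for the same $R$; the stability estimate used in Step~2 applied to the pair then gives $\bbE|\Xbar t-\Xbar t'|^2\equiv 0$ by Gr\"onwall.

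\textbf{Anticipated obstacle.} The only non-routine difficulty is verifying that the stability estimate for $\conspointnoarg$ is applicable \emph{uniformly} along the iteration: \Cref{lemma: stab_estim_weig_mean} requires both measures to lie in $\CP_{2,R'}$, and this is where working on $\CB_R$ rather than all of $\CP_q$ is essential. Once the invariant-ball construction is in place, tracking the constants is mechanical, and the explicit form of $\cbmmf$ follows transparently from the Gr\"onwall step in Step~1 since every estimate there depends only on $\CE$ (through $\ccor$ and $\cprel$), $\alpha$, $\lambda$, $\sigma$, $\kappa(\stoc)$, $q$, $T$, and the $q$-th moment of $\rho_0$.
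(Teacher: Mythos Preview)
The paper does not supply its own proof of this lemma; it is quoted verbatim from \cite[Theorem~2.3]{gerber2023mean}, with only the explicit constant dependencies added, so there is no paper-side argument to compare against. Your outline follows exactly the standard route used in that reference (and in \cite{carrillo2018analytical} before it): a priori moment bound via It\^o/BDG/Gr\"onwall, then a Banach fixed point on frozen-law SDEs using the $\CW_2$-stability of $\conspointnoarg$, then pathwise uniqueness from the same stability estimate. The constant tracking in Step~1 is correct and is all the paper actually needs.

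One imprecision worth flagging: the sentence ``the same It\^o--BDG--Gr\"onwall computation of Step~1 shows $\Phi$ maps $\CB_R$ into itself for $R$ chosen as the bound produced in Step~1'' is not quite right as stated. In Step~1 you used the self-consistency $\rho_s=\Law(\Xbar s)$ to collapse $\int|y|^q\,\d\rho_s$ into $\bbE|\Xbar s|^q$; for a \emph{frozen} $\mu\in\CB_R$ that term is instead bounded by $R$, so Gr\"onwall gives $\bbE\sup_{s\le T}|Y^\mu_s|^q\le(\bbE|X_0|^q+CTR)e^{CT}$, which is $\le R$ only when $CTe^{CT}<1$. This is precisely why the short-time-plus-iteration (or weighted-norm) device you mention for the contraction is also needed for invariance; once you run both on a small subinterval and concatenate, everything goes through. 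Your ``anticipated obstacle'' paragraph shows you already see that the ball restriction is forced by \Cref{lemma: stab_estim_weig_mean}, so this is a presentation issue rather than a gap.
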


The remaining lemmas are used solely in the proof of \Cref{thm:gap_tEM_tmfCBO}.

\begin{lemma}[{\cite[Lemma 2.5]{gerber2023mean}}: Bound on the probability of large excursions]
\label{lemma: prob_bound_larg_excur} 
    Let $\{ Z^i \}^{i \in \setnumber{N}}$ be a family of real-valued i.i.d.\@ random variables such that $\bbE \SN{Z^1}^r<\infty$ for some $r \ge 2$. Fix $R>\bbE \SN{Z^1}$. Then, there exists a constant $\cle = \cle(r,R)>0$ such that
    \begin{equation*}
        \bbP\left( \frac{1}{N} \sum_{i=1}^N Z^i \geq R\right) \leq \cle N^{-\frac{r}{2}}.
    \end{equation*}
\end{lemma}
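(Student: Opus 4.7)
The plan is to use a classical Marcinkiewicz--Zygmund type $L^r$ bound combined with the Markov inequality at order $r$. Set $\mu := \bbE Z^1$ and $\delta := R - \mu$, which is strictly positive by hypothesis. Centering the variables and using $\{(1/N)\sum_i Z^i \geq R\} \subset \{\SN{(1/N)\sum_i(Z^i-\mu)} \geq \delta\}$, Markov's inequality applied with the exponent $r\ge 2$ gives
\begin{equation*}
    \bbP\left(\frac{1}{N}\sum_{i=1}^N Z^i \geq R\right)
    \;\leq\; \frac{1}{\delta^r\, N^r}\,\bbE\absbig{\sum_{i=1}^N (Z^i-\mu)}^r.
\end{equation*}

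Next I would invoke the Marcinkiewicz--Zygmund inequality for the i.i.d.\ centered family $\{Z^i-\mu\}_{i\in\setnumber{N}}$, which for $r\ge 2$ yields a constant $B_r>0$ with
\begin{equation*}
    \bbE\absbig{\sum_{i=1}^N (Z^i-\mu)}^r
    \;\leq\; B_r\,\bbE\left(\sum_{i=1}^N \SN{Z^i-\mu}^2\right)^{r/2}.
\end{equation*}
Since $r/2\geq 1$, a single application of Jensen's (or the power-mean) inequality gives
$(\sum_{i=1}^N a_i)^{r/2} \leq N^{r/2-1}\sum_{i=1}^N a_i^{r/2}$ for non-negative $a_i$, so taking $a_i = \SN{Z^i-\mu}^2$ and using i.i.d.\ identical distribution yields the bound $B_r N^{r/2}\,\bbE\SN{Z^1-\mu}^r$. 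Note that $\bbE\SN{Z^1-\mu}^r<\infty$ because $\bbE\SN{Z^1}^r<\infty$ (by the triangle inequality in $L^r$ and $\SN{\mu}\leq \bbE\SN{Z^1}<\infty$). Substituting into the Markov bound gives exactly the desired $N^{-r/2}$ rate, with
\begin{equation*}
    \cle \;=\; \frac{B_r\,\bbE\SN{Z^1-\mu}^r}{(R-\mu)^r},
\end{equation*}
which depends only on $r$ and $R$ (through $\mu$ and $\bbE\SN{Z^1-\mu}^r$, both fixed by the distribution of $Z^1$ and the threshold $R$).

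This argument is essentially routine; the main thing to be careful about is to phrase it so that the statement of the MZ inequality is clearly applicable (i.e.\ i.i.d., centered, finite $r$-th moment) and that the constant $\cle$ is genuinely finite and depends only on $(r,R)$ as claimed. If one preferred to avoid quoting MZ explicitly, an equivalent route would be to use Rosenthal's inequality, which gives the same $N^{r/2}$ scaling for $r\ge 2$ (the $N\,\bbE\SN{Z^1-\mu}^r$ term being dominated by the variance term times $N^{r/2}$ up to $r$-dependent constants). Either way, the proof is a direct $L^r$-Markov argument and no delicate step is expected.
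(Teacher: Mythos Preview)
Your argument is correct: centering, Markov at order $r$, Marcinkiewicz--Zygmund, and the convexity bound for $r/2\geq1$ give exactly the $N^{-r/2}$ rate with a finite constant (note $\delta=R-\mu>0$ since $\mu=\bbE Z^1\leq\bbE\SN{Z^1}<R$). The paper does not supply its own proof of this lemma but merely quotes it from \cite[Lemma~2.5]{gerber2023mean}, whose proof follows the same standard $L^r$--Markov/MZ route you outline.
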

\begin{lemma}[{\cite[Corollary 3.3]{gerber2023mean}}: Stability estimate for the weighted mean]
\label{lemma: stab_estim_weig_mean}
     Suppose that $\CE$ satisfies \Cref{ass:wp}. Fix $R>0$. Then, there exists a constant $\cstab = \cstab(\CE,\alpha,R)$ such that for all $(\indivmeasure,\nu)\in \mathcal{P}_{2}(\mathbb{R}^d)\times \mathcal{P}_{2,R}(\mathbb{R}^d)$ it holds
     \begin{equation*}
         \SN{\conspoint{\indivmeasure}-\conspoint{\nu}} \leq \cstab \CW_2(\indivmeasure,\nu).
     \end{equation*}
\end{lemma}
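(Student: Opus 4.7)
The plan is to reduce the bound on $\SN{\conspoint{\indivmeasure} - \conspoint{\nu}}$ to estimates on integral differences of the two measures, which can then be controlled through an optimal $\CW_2$-coupling together with the local Lipschitz properties of $\omegaa$ and $x \mapsto x \omegaa(x)$ inherited from \Cref{ass:wp_lipschitz}. Writing $\omegaa(x) = e^{-\alpha \CE(x)}$ and setting $A_\mu := \int_{\Rd} x\omegaa\,\d\mu$, $B_\mu := \int_{\Rd} \omegaa\,\d\mu$ for $\mu \in \CP_2(\Rd)$, I would start from the algebraic identity
\begin{equation*}
    \conspoint{\indivmeasure} - \conspoint{\nu}
    = \frac{A_\indivmeasure - A_\nu}{B_\nu}
    + \conspoint{\indivmeasure}\,\frac{B_\nu - B_\indivmeasure}{B_\nu},
\end{equation*}
which places only $B_\nu = \int \omegaa\,\d\nu$ in the denominators. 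A Jensen-inequality application to the convex function $e^{-\alpha \cdot}$, combined with the at-most-quadratic growth of $\CE$ from \Cref{ass:wp_growth}, then yields the uniform lower bound $B_\nu \geq \exp(-\alpha(\minobj + \cu(1+R)))$, depending only on $\CE$, $\alpha$, $R$.

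For each integral difference I would take an optimal coupling $\pi$ achieving $\CW_2(\indivmeasure,\nu)^2 = \int \SN{x-y}^2\,\d\pi$. Using \Cref{ass:wp_lipschitz} one obtains $\SN{\omegaa(x) - \omegaa(y)} \leq \alpha \Le\, e^{-\alpha\minobj}(1 + \SN{x} + \SN{y})\SN{x-y}$, and an analogous pointwise estimate for $x\omegaa(x) - y\omegaa(y)$ carrying an additional factor $(1 + \SN{x})$. Integrating against $\pi$ and applying Cauchy--Schwarz then yields bounds of the form $C(1 + \sqrt{M_2(\indivmeasure)} + \sqrt{M_2(\nu)})\,\CW_2(\indivmeasure,\nu)$ for $\SN{A_\indivmeasure - A_\nu}$ and $\SN{B_\indivmeasure - B_\nu}$, with the shorthand $M_2(\mu) := \int \SN{x}^2\,\d\mu$. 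Combined with the lower bound on $B_\nu$ and the estimate $\SN{\conspoint{\indivmeasure}} \leq \sqrt{\cprel\, M_2(\indivmeasure)}$ from \Cref{lemma: mom_bounds}, this delivers a preliminary estimate of the form $C_{\CE,\alpha,R}\,(1 + M_2(\indivmeasure))\,\CW_2(\indivmeasure,\nu)$.

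The main obstacle is that $\indivmeasure$ is only assumed to lie in $\CP_2(\Rd)$, so $M_2(\indivmeasure)$ is not uniformly bounded, whereas $\cstab$ must depend only on $\CE,\alpha,R$. I would resolve this through a dichotomy on $\CW_2(\indivmeasure,\nu)$. In the regime $\CW_2(\indivmeasure,\nu) \leq 1$, the $\CW_2$ triangle inequality gives $M_2(\indivmeasure) \leq 2R + 2\CW_2(\indivmeasure,\nu)^2 \leq 2R + 2$, so all prefactors collapse to functions of $\CE,\alpha,R$ and the above decomposition directly delivers the claim. In the regime $\CW_2(\indivmeasure,\nu) > 1$, the trivial bound $\SN{\conspoint{\indivmeasure} - \conspoint{\nu}} \leq \sqrt{\cprel M_2(\indivmeasure)} + \sqrt{\cprel R}$, combined with $\sqrt{M_2(\indivmeasure)} \lesssim \sqrt{R} + \CW_2(\indivmeasure,\nu)$ from the same triangle inequality, produces a bound of order $(1+\sqrt{R})\,\CW_2(\indivmeasure,\nu)$ automatically, completing the proof and identifying $\cstab$ as a constant depending only on $\CE$, $\alpha$, and $R$.
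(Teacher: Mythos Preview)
The paper does not give its own proof of this lemma; it is listed in \Cref{app:resultsGHV23} as a technical auxiliary result quoted verbatim from \cite[Corollary~3.3]{gerber2023mean}, so there is no in-paper argument to compare against. Your overall architecture --- the algebraic identity isolating $B_\nu$ in the denominator, the Jensen lower bound on $B_\nu$ via \Cref{ass:wp_growth}, the optimal-coupling estimates, and the dichotomy on $\CW_2(\indivmeasure,\nu)\lessgtr 1$ to control $M_2(\indivmeasure)$ in terms of $R$ --- is exactly the standard route and matches how such stability estimates are derived in \cite{gerber2023mean,carrillo2018analytical}.

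There is, however, a genuine gap in your treatment of $\SN{A_\indivmeasure-A_\nu}$. Under \Cref{ass:wp_lipschitz} the local Lipschitz constant of $x\mapsto x\,\omegaa(x)$ grows \emph{quadratically}, as your own pointwise bound with the extra factor $(1+\SN{x})$ reflects; applying Cauchy--Schwarz to $\int (1+\SN{x})(1+\SN{x}+\SN{y})\SN{x-y}\,\d\pi$ therefore requires fourth moments of $\indivmeasure$ and $\nu$, not merely second. Your asserted bound $C(1+\sqrt{M_2(\indivmeasure)}+\sqrt{M_2(\nu)})\,\CW_2(\indivmeasure,\nu)$ does not follow: for instance, with $\nu=\tfrac12\delta_{-1}+\tfrac12\delta_{1}$ and $\indivmeasure_n$ obtained by moving mass $\epsilon_n=n^{-3}$ from $1$ to $a_n=n$, the integral of your pointwise majorant is of order $\epsilon_n a_n^3=1$, while $(1+\sqrt{M_2(\indivmeasure_n)}+\sqrt{M_2(\nu)})\,\CW_2(\indivmeasure_n,\nu)\to 0$. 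Even after your dichotomy reduces to $M_2(\indivmeasure)\leq 2R+2$, this does not supply fourth moments. Closing this gap requires more than a direct pointwise-bound-plus-Cauchy--Schwarz step --- one must exploit additional structure, such as the uniform boundedness of $\omegaa$ combined with the Lipschitz estimate, or the decay of $\omegaa$ coming from the quadratic lower growth alternative in \Cref{ass:wp_growth}, as is done in \cite{gerber2023mean}.
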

\begin{lemma}[{\cite[Lemma 3.7]{gerber2023mean}}: Convergence of the weighted mean for i.i.d.\@ samples]
\label{lemma: conv_weig_mean_iid_samp} 
Suppose that $\CE: \Rd \to \mathbb{R}$ is bounded from below.
Let $\indivmeasure\in\mathcal{P}_q(\mathbb{R}^d)$ for some $q > 2$.
Then, there exists a constant $\cwm = \cwm(\CE,\alpha,q)$ such that 
\begin{equation*}
    \bbE \SN{\conspoint{\overline{\indivmeasure}^N}-\conspoint{\indivmeasure}}^2 \leq \cwm N^{-1}, 
\end{equation*}
where $\overline{\indivmeasure}^N\coloneqq \frac{1}{N}\sum_{n=1}^N \delta_{\overline{X}^n}$ and $\{ \overline{X}^n \}^{n\in\mathbb{N}}$ are i.i.d.\@ samples from $\indivmeasure$.
\end{lemma}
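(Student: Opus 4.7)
My plan is to prove the $O(N^{-1})$ rate via a classical ratio-estimator decomposition combined with a careful treatment of the random denominator. Introduce the i.i.d.\ sums $M^N := \frac{1}{N}\sum_{n=1}^N \overline{X}^n \omega_\alpha(\overline{X}^n)$ and $Z^N := \frac{1}{N}\sum_{n=1}^N \omega_\alpha(\overline{X}^n)$, with population analogues $M := \int x\,\omega_\alpha(x)\,\d\indivmeasure(x)$ and $Z := \int \omega_\alpha(x)\,\d\indivmeasure(x) > 0$. The algebraic identity
\[
\conspoint{\overline{\indivmeasure}^N} - \conspoint{\indivmeasure}
 = \frac{M^N - M}{Z^N} + \frac{M\,(Z - Z^N)}{Z\, Z^N}
\]
reduces the problem to bounding two sums of i.i.d.\ random variables, modulo a lower bound on $Z^N$. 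Since $\CE$ is bounded from below by $\underbar{\CE}$, the weights $\omega_\alpha$ take values in $[0, e^{-\alpha\underbar{\CE}}]$, and since $\indivmeasure \in \CP_q(\Rd)$ with $q>2$, independence together with the finiteness of the second moment yields the standard variance estimates $\bbE\SN{M^N - M}^2 \lesssim N^{-1}$ and $\bbE\SN{Z^N - Z}^2 \lesssim N^{-1}$, with implicit constants depending on $\alpha, \CE$ and the moments of $\indivmeasure$.

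The central obstacle is the absence of a deterministic lower bound on $Z^N$. I would handle this by splitting on the good event $G_N := \{Z^N \ge Z/2\}$. On $G_N$, the bound $1/Z^N \le 2/Z$, combined with the decomposition and the variance estimates above, directly gives
\[
\bbE\!\left(\SN{\conspoint{\overline{\indivmeasure}^N} - \conspoint{\indivmeasure}}^2 \mathbf{1}_{G_N}\right) \lesssim N^{-1}.
\]
On $G_N^c$, the boundedness of $\omega_\alpha$ allows a Hoeffding-type concentration estimate $\bbP(G_N^c) \le 2\exp(-cN)$ for some $c = c(\alpha, \CE, Z) > 0$ (or, equivalently, a polynomial estimate $\bbP(G_N^c) \lesssim N^{-r}$ of arbitrary order $r$ obtained via high-moment Marcinkiewicz-Zygmund/Rosenthal-type bounds applied to the bounded summands $\omega_\alpha(\overline{X}^n) - Z$). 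Combined with the uniform bound $\bbE\SN{\conspoint{\overline{\indivmeasure}^N}}^q \le \cprel\,\bbE\SN{\overline{X}^1}^q < \infty$ from \Cref{lemma: mom_bounds} (and the analogous bound for $\conspoint{\indivmeasure}$), H\"older's inequality yields
\[
\bbE\!\left(\SN{\conspoint{\overline{\indivmeasure}^N} - \conspoint{\indivmeasure}}^2 \mathbf{1}_{G_N^c}\right) \lesssim \big(\bbP(G_N^c)\big)^{1 - 2/q} = o(N^{-1}).
\]

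The main subtlety is precisely the choice of tail bound for $G_N^c$: a mere variance estimate would give only $\bbP(G_N^c) \lesssim N^{-1}$, producing the bad-event rate $N^{-(q-2)/q}$, which is insufficient to match $N^{-1}$ when $q$ is close to $2$. The additional integrability or exponential concentration afforded by the boundedness of $\omega_\alpha$ is therefore essential, and is the only non-routine ingredient. Summing the contributions on $G_N$ and $G_N^c$ concludes the argument, with the constant $\cwm$ tracking dependence on $\CE, \alpha, q$ through the sup of $\omega_\alpha$, the lower bound $Z$, and the $q$-th moment of $\indivmeasure$.
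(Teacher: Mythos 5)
The paper itself offers no proof of this lemma: it is imported verbatim from \cite[Lemma 3.7]{gerber2023mean} as a technical auxiliary result, so there is no in-paper argument to compare against. Judged on its own terms, your proposal is a correct and complete route to the $O(N^{-1})$ rate. The ratio decomposition into $M^N/Z^N$ versus $M/Z$, the $N^{-1}$ variance bounds for the i.i.d.\ averages (using $0<\omegaa(x)\le e^{-\alpha\minobj}$ and $\int \SN{x}^2\d\indivmeasure<\infty$), the split on the good event $\{Z^N\ge Z/2\}$, and—crucially—your observation that the bad event must be shown to have probability $o\big(N^{-q/(q-2)}\big)$ rather than merely $O(N^{-1})$, which Hoeffding (or arbitrarily high-order Rosenthal/Marcinkiewicz--Zygmund bounds for the bounded summands $\omegaa(\overline{X}^n)$) delivers, are exactly the right ingredients; this is also essentially how the cited reference argues.

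One small repair is needed in the bad-event step. You invoke \Cref{lemma: mom_bounds} to get the uniform bound $\bbE\SN{\conspoint{\overline{\indivmeasure}^N}}^q\lesssim \bbE\SN{\overline{X}^1}^q$, but that lemma requires \Cref{ass:wp_lowerbound,ass:wp_growth}, whereas the present statement assumes only that $\CE$ is bounded from below. Under the stated hypotheses, use instead the elementary fact that $\conspoint{\overline{\indivmeasure}^N}$ is a convex combination of the samples, so $\SN{\conspoint{\overline{\indivmeasure}^N}}\le\max_n\SN{\overline{X}^n}$ and hence $\bbE\SN{\conspoint{\overline{\indivmeasure}^N}}^q\le N\,\bbE\SN{\overline{X}^1}^q$; the polynomial growth in $N$ is harmless against the exponentially small $\bbP(G_N^c)$ in the H\"older step. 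Finally, be aware that the constant produced by your argument necessarily depends on $\indivmeasure$ through $Z=\N{\omegaa}_{L^1(\indivmeasure)}$ and through its moments, not only on $(\CE,\alpha,q)$ as the statement's notation suggests; in the paper's application this is unproblematic because the measures $\rho_t$ to which the lemma is applied have uniformly controlled moments on $[0,T]$.
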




\newpage
\section{Constants appearing in the manuscript}
\label{app:constants}
In this appendix, we group the constants that have been utilized throughout the manuscript. We specify their dependencies, as well as the results in which they have been used.
Please note that whenever a constant depends on $\CE$, it means that it depends on some of the constants that appear in \Cref{ass:wp,ass:ICP}. In addition, for the constants in the lemmas of \Cref{app:resultsGHV23}, we specify their dependencies as they are utilized in the manuscript.

\begin{table}[ht]
\centering
\renewcommand{\arraystretch}{1.3}
\small
\begin{tabular}{|l|l|l|}
\hline
\textbf{Constant} & \textbf{Related results} & \textbf{Dependencies} \\ \hline
$\kappa(\stoc)$ &  & $d$, if $\stoc = \stocis$\\ \hline
$\cna$ & \Cref{thm:gap_tEM_globmin,thm:gap_tEM_tmfCBO}& $\Ce, \alpha, \lambda, \sigma, \kappa(\stoc), T, \rho_0, \cbt$ \\ \hline
$\cmfa$ & \Cref{thm:gap_tEM_globmin,thm:gap_tEM_tmfCBO}& $\CE, \alpha, \lambda, \sigma, \kappa(\stoc), q, T, \rho_0$ \\ \hline
$\cprel$ & \Cref{coroll: prel,lemma: mom_bounds} & $\CE,\alpha,q$ \\ \hline
$\ccor$ & \Cref{coroll: prel,lemma:stability_drift_diff}& $\CE, \alpha, \lambda, \sigma, \kappa(\stoc), q$ \\ \hline
$\cmom$ & \Cref{lemma:moment_estimate_tEMCBO}& $\CE, \alpha, \lambda, \sigma, \kappa(\stoc),q,T,\rho_0$ \\ \hline
$\cbm$ & \Cref{lemma: moment_est_brown}& $d,q$ \\ \hline
$\cgap$ & \Cref{lem: gap_t-EM-CBO_t-EM-CBOint}& $\CE, \alpha, \lambda, \sigma, \kappa(\stoc), q, T, \rho_0$ \\ \hline 
$\cbmmf$ &\Cref{lemma: exis_uniq_mf_tCBO} &  $\CE, \alpha, \lambda, \sigma, \kappa(\stoc), q, T, \rho_0$\\ \hline
$\cle$ & \Cref{lemma: prob_bound_larg_excur}& $q, \cbmmf$ \\ \hline
$\cstab$ & \Cref{lemma: stab_estim_weig_mean}  & $\CE, \alpha, \cbmmf$ \\ \hline
$\cwm$ & \Cref{lemma: conv_weig_mean_iid_samp} & $\CE, \alpha$ \\ \hline
\end{tabular}
\end{table}

\end{document}